\documentclass[12pt,reqno] {amsart}
\usepackage{amsfonts,amssymb,amsmath,amsthm}
\usepackage{mathrsfs}
\usepackage{url}
\usepackage{enumerate}
\usepackage{xcolor}
\usepackage{todonotes}
\usepackage{ulem}

\urlstyle{sf}
\newtheorem{theorem}{Theorem}[section]
\newtheorem*{theorem*}{Theorem}
\newtheorem{lemma}[theorem]{Lemma}

\newtheorem{prop}[theorem]{Proposition}

\theoremstyle{definition}

\newtheorem{rem}[theorem]{Remark}


\voffset = -32pt \hoffset = -64pt \textwidth = 16.5cm
\textheight = 21cm

\author[M. Bhattacharjee]{Monojit Bhattacharjee}
\author[R. Gupta]{Rajeev Gupta}
\author[V. Venugopal]{Vidhya Venugopal}

\address[M. Bhattacharjee]{Department of Mathematics, Birla Institute of Technology and Science, Pilani, K K Birla Goa Campus, Zuarinagar, Sancoale, Goa 403726, India
}

\address[R. Gupta]{School of Mathematics and Computer Science\\
Indian Institute of Technology Goa, India}

\address[V. Venugopal]{Department of Mathematics \\
Birla Institute of Technology and Science K K Birla Goa Campus, India
 }


\keywords{$m$-isometry, $m$-concave operators, wandering subspace property, Wold-type decomposition, Dirichlet-type spaces}
\subjclass[2010]{Primary 46E20, 47B32, 47B38,  Secondary 47A50, 31C25}
\begin{document}
\title{Wold-type decomposition for Doubly commuting two-isometries}

\begin{abstract}
In this article, we prove that any pair of doubly commuting $2$-isometries on a Hilbert space has a Wold-type decomposition. 
Moreover,  
the analytic part of the pair is unitary equivalent to the pair of multiplication by coordinate function on a Dirichlet-type space on the bidisc. 
\end{abstract}

\maketitle

\section{Introduction}
Let $\mathcal{H}$ be a Hilbert space and $\mathcal{B}(\mathcal{H})$ denote the set of all bounded linear operators on $\mathcal{H}$. An operator $T \in \mathcal{B}(\mathcal{H})$ is said to be \textit{concave} if for every $h\in\mathcal H,$
\begin{eqnarray}\label{concave}
    \|T^2h\|^2 - 2 \|Th\|^2  + \|h\|^2  \leq 0.
\end{eqnarray}
An operator $T\in\mathcal B(\mathcal H)$ is called
 a \textit{$2$-isometry} if for each $h\in\mathcal H,$ \eqref{concave} holds with equality. 
 A closed subspace $\mathcal{W} \subseteq \mathcal{H}$ is said to be a \textit{wandering subspace} for $T$ if $ \mathcal{W}  \perp T^n \mathcal{W}$ for all $n \geq 1$ and an operator $T \in \mathcal{B}(\mathcal{H})$ is said to have \textit{wandering subspace property}  if $\mathcal{H} = \bigvee_{n \geq 0} T^n \mathcal{W}$ for some wandering subspace $\mathcal{W}$ of $T$. For an operator \( T \in \mathcal{B}(\mathcal{H}) \) with the wandering subspace property and a wandering subspace \( \mathcal{W} \), it holds that \( \mathcal{H} \ominus T\mathcal{H} = \mathcal{W} \), which shows that the wandering subspace is uniquely determined. Throughout the article, the subspace $ \mathcal{H} \ominus T\mathcal{H}$ shall be denoted by $\mathcal E_T.$

A key and highly useful result in the study of bounded linear operators on a Hilbert space is the Wold decomposition theorem. The classical version states that any isometry \( V \in \mathcal{B}(\mathcal{H}) \) (i.e., \( V^*V = I_{\mathcal{H}} \)) is a direct sum of a unitary and an unilateral shift (see \cite{W} and also see page 3 in \cite{NF}). 
The Wold decomposition and results analogous to it have profound impact in Operator theory, Operator algebra and also invariant subspace problem for Hilbert space of holomorphic functions (see \cite{KM, Ma, KMN,  SZ}). 
Wold-type decompositions of $2$-isometric operators also have been studied by many authors, see \cite{Richter1, Shimorin, Olofsson}. 
For a $2$-isometry $T \in \mathcal{B}(\mathcal{H})$, it is well-known that the subspace $\displaystyle \cap_{n \geq 0} T^n \mathcal{H}$ (:= $\mathcal{H}_0$) is $T$-reducing and $T|_{\mathcal{H}_0}$ is unitary (see \cite[Proposition 1.1]{Olofsson}). The operator $T \in \mathcal{B}(\mathcal{H})$ is said to be \textit{analytic} provided $\mathcal{H}_0 = \{0\}$. 
In \cite{Richter2}, the author proves the following Wold-type decomposition for a the class of analytic concave operators.
\begin{theorem}[\cite{Richter2}]\label{analytic+2-isometry}
    Let $T \in \mathcal{B}(\mathcal{H})$ be an analytic concave operator.
    Then $\mathcal{H} \ominus T\mathcal{H}$ is a wandering subspace for $T.$
\end{theorem}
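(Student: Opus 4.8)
The plan is to prove the equivalent assertion that $\mathcal{M} := \bigvee_{n\ge 0} T^n \mathcal{E}_T$ equals $\mathcal{H}$; writing $\mathcal{N} := \mathcal{H} \ominus \mathcal{M}$, I will show $\mathcal{N} = \{0\}$. I would begin by extracting the analytic consequences of concavity. Applying \eqref{concave} to $T^n h$ shows that the increments $\|T^{n+1}h\|^2 - \|T^n h\|^2$ form a non-increasing sequence; were any of them negative, $\|T^n h\|^2$ would eventually become negative, so in fact every increment is $\ge 0$ and $\|T^n h\|$ is non-decreasing. In particular $\|Th\| \ge \|h\|$, so $T$ is bounded below, $T\mathcal{H}$ is closed, $\mathcal{E}_T = \ker T^*$, and $\mathcal{H} = \mathcal{E}_T \oplus T\mathcal{H}$. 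The same applies to each power $T^n$, so the subspaces $T^n\mathcal{H}$ are closed and decrease to $\mathcal{H}_0 = \bigcap_n T^n\mathcal{H}$; hence the orthogonal projections $P_{T^n\mathcal{H}}$ decrease strongly to $P_{\mathcal{H}_0}$, which is $0$ by the analyticity hypothesis.

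Next I would introduce the left inverse $L = (T^*T)^{-1}T^*$, which satisfies $LT = I$ and $TL = P_{T\mathcal{H}}$, together with its adjoint, the Cauchy dual $T' = L^* = T(T^*T)^{-1}$. Since $T^*T \ge I$ gives $(T^*T)^{-1} \le I$, one checks $\|T'h\|^2 = \langle (T^*T)^{-1}h, h\rangle \le \|h\|^2$, so $T'$ (equivalently $L$) is a contraction. The subspace $\mathcal{M}$ is $T$-invariant by construction, so $\mathcal{N} = \mathcal{M}^\perp$ is $T^*$-invariant; and since $\mathcal{E}_T \subseteq \mathcal{M}$ we have $\mathcal{N} \subseteq (\ker T^*)^\perp = T\mathcal{H}$. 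The decisive structural input I am aiming for is the complementary invariance: that $\mathcal{M}$ is invariant under the Cauchy dual $T'$.

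Granting that, the proof closes quickly. Invariance of $\mathcal{M}$ under $T' = L^*$ is equivalent to invariance of $\mathcal{N}$ under $L$. Then for any $x \in \mathcal{N}$ we have $Lx \in \mathcal{N}$, while $x \in T\mathcal{H}$ gives $x = TLx$; hence $x \in T\mathcal{N}$, i.e. $\mathcal{N} \subseteq T\mathcal{N}$. Iterating, $\mathcal{N} \subseteq T^n \mathcal{N} \subseteq T^n\mathcal{H}$ for every $n$, so $\mathcal{N} \subseteq \bigcap_n T^n\mathcal{H} = \mathcal{H}_0 = \{0\}$, which is exactly $\mathcal{M} = \mathcal{H}$.

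I expect the one real obstacle to be establishing that $\mathcal{M} = \bigvee_n T^n\mathcal{E}_T$ is invariant under the Cauchy dual $T'$. This is the step that genuinely requires concavity rather than mere left-invertibility: for an isometry $T' = T$ and the statement is trivial, but in general $T'$ need not map $T^k\mathcal{E}_T$ into $\mathcal{M}$, and it is precisely the inequality $\|T^2h\|^2 - 2\|Th\|^2 + \|h\|^2 \le 0$ that must be exploited to control $T' = T(T^*T)^{-1}$ against the generators $T^k\mathcal{E}_T$. Concretely I would try to show $T'\mathcal{M}\subseteq\mathcal{M}$ by combining the contractivity of $T'$ with the expansivity of $T^*$ on $\mathcal{N}$ — note $\|T^*x\|^2 \ge \langle (T^*T)^{-1}T^*x, T^*x\rangle = \|x\|^2$ for $x \in \mathcal{N} \subseteq T\mathcal{H}$ — the interplay of these one-sided bounds being where the full strength of concavity, and not just $T^*T \ge I$, enters. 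This is the analogue, in the present intrinsic language, of Shimorin's wandering-subspace estimate for operators close to isometries.
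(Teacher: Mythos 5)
Your reduction is internally coherent as far as it goes: concavity does give $\|Th\|\geq\|h\|$, hence closed ranges and $\mathcal{E}_T=\ker T^*$; the operators $L=(T^*T)^{-1}T^*$ and $T'=L^*$ have the properties you list; and \emph{if} $\mathcal{N}=\mathcal{M}^\perp$ were invariant under $L$, then $x=TLx$ for $x\in\mathcal{N}\subseteq T\mathcal{H}$ would indeed force $\mathcal{N}\subseteq T\mathcal{N}\subseteq T^2\mathcal{N}\subseteq\cdots\subseteq\bigcap_{n}T^n\mathcal{H}=\{0\}$. The genuine gap is the step you defer as ``the one real obstacle'': the invariance $T'\mathcal{M}\subseteq\mathcal{M}$ is never proved, and it is not a detachable technicality --- for an analytic concave $T$ it is \emph{equivalent} to the theorem. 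If the theorem holds, then $\mathcal{M}=\mathcal{H}$ and the invariance is vacuous; conversely, by your own closing argument, the invariance implies the theorem. So the proposal reformulates the problem rather than solving it. Worse, the two estimates you propose to combine --- contractivity of $T'$ (i.e. $(T^*T)^{-1}\leq I$) and $\|T^*x\|\geq\|x\|$ for $x\in T\mathcal{H}$ --- are consequences of expansivity $T^*T\geq I$ alone. Since analyticity together with left-invertibility is known to be insufficient for the wandering subspace property (there exist analytic $m$-isometries, $m\geq 3$, without it), no argument assembled purely from such first-order inequalities can succeed; the second-order inequality \eqref{concave} must enter quantitatively, and your outline never specifies how.

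For comparison, note that the paper itself gives no proof of Theorem \ref{analytic+2-isometry}: it is quoted from \cite{Richter2}. Richter's proof runs through the analytic model: one forms $Vx=\sum_{n\geq 0}(P_{\mathcal{E}_T}L^nx)z^n$, proves the norm identity recorded in this paper as \eqref{norm identity}, identifies the image of $V$ with a Dirichlet-type space $\mathcal{D}(\mu)$ built from the defect operator $(T^*T-I)^{1/2}$, and then invokes density of polynomials in $\mathcal{D}(\mu)$ --- that density theorem is where the real work lies. Shimorin's proof \cite{Shimorin} is the one your outline is implicitly reaching for: he establishes the duality $\big(\bigvee_{n\geq 0}T^n\mathcal{E}_T\big)^{\perp}=\bigcap_{n\geq 1}T'^n\mathcal{H}$, so that your $\mathcal{N}$ is precisely the hyper-range of the Cauchy dual and the wandering subspace property for $T$ is exactly analyticity of $T'$; he then deduces analyticity of $T'$ from analyticity of $T$ by inequalities that use concavity in an essential way. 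Supplying one of these two ingredients --- Richter's polynomial density theorem or Shimorin's concavity estimate for the Cauchy dual --- is exactly what your proposal still lacks.
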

Later, Shimorin extended this result for general bounded left-invertible operators (see \cite[Theorem 3.6]{Shimorin}). For the sake of completeness of this article, we state this result only for the class of $2$-isometries in Theorem \ref{olofsson-analytic}. 
\begin{theorem}[\cite{Olofsson}]\label{olofsson-analytic}
    Let $T$ be a $2$-isometry on $\mathcal{H}$. Then there exists a positive $\mathcal{B}(\mathcal{E}_T)$-valued operator measure $\mu$  on the unit circle $\mathbb{T}$ 
    such that 
    \[
    \big(T, \mathcal{H} \big) \cong \left( \begin{bmatrix} 
      U &  0 \\
      0                  & M_z
    \end{bmatrix} , \mathcal{H}_0 \oplus \mathcal{D}_{\mathcal E_T}(\mu) \right),
    \] where $\mathcal{H}_0$ is $T$-reducing subspace, $U$ is unitary and $M_z$ on $\mathcal D_{\mathcal E_T}(\mu)$ is an analytic $2$-isometry.   
\end{theorem}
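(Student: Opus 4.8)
The plan is to separate the unitary and analytic parts first, and then to build an explicit functional model for the analytic summand. First I would isolate $\mathcal{H}_0 = \bigcap_{n\ge 0} T^n\mathcal{H}$. By \cite[Proposition 1.1]{Olofsson} (quoted in the introduction), $\mathcal{H}_0$ reduces $T$ and $U := T|_{\mathcal{H}_0}$ is unitary. Since $\mathcal{H}_0$ reduces $T$, so does $\mathcal{H}_1 := \mathcal{H}\ominus\mathcal{H}_0$, and $T_1 := T|_{\mathcal{H}_1}$ is again a $2$-isometry with $\bigcap_{n\ge 0}T_1^n\mathcal{H}_1 = \{0\}$, i.e.\ $T_1$ is analytic. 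Because every $2$-isometry is concave, $T_1$ is an analytic concave operator, so Theorem~\ref{analytic+2-isometry} applies: $\mathcal{E}_T = \mathcal{H}_1\ominus T_1\mathcal{H}_1$ is a wandering subspace for $T_1$ and $\mathcal{H}_1 = \bigvee_{n\ge 0} T_1^n\mathcal{E}_T$. (Note $\mathcal{E}_T = \mathcal{H}\ominus T\mathcal{H}$, since $T\mathcal{H}_0 = \mathcal{H}_0$.) It then remains to show that an analytic $2$-isometry is unitarily equivalent to $M_z$ on some $\mathcal{D}_{\mathcal{E}_T}(\mu)$; this identification produces both the claimed block form and the fact that $M_z$ is an analytic $2$-isometry.

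For the analytic part I would construct a functional model. A $2$-isometry satisfies $T^*T\ge I$ (as $n\mapsto\|T^nh\|^2$ is affine and nonnegative), so $T_1$ is left-invertible with left inverse $L = (T_1^*T_1)^{-1}T_1^*$. Following the Shimorin--Richter analytic model, define
\[
 (\Gamma h)(z) = \sum_{n\ge 0}\big(P_{\mathcal{E}_T} L^{n} h\big)\, z^{n}, \qquad h\in\mathcal{H}_1,\ z\in\mathbb{D},
\]
where $P_{\mathcal{E}_T}$ is the orthogonal projection onto $\mathcal{E}_T$. One checks that $\Gamma$ maps $\mathcal{H}_1$ into $\mathcal{E}_T$-valued holomorphic functions on $\mathbb{D}$, that $L T_1 = I$ forces $\Gamma T_1 = M_z\Gamma$, and---using the wandering subspace property from the previous step---that $\Gamma$ is injective with range a reproducing kernel space of holomorphic functions. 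Thus $T_1$ is unitarily equivalent to $M_z$ on $\mathcal{R} := \overline{\operatorname{ran}}\,\Gamma$ endowed with the transported norm.

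It remains to identify $\mathcal{R}$ with a Dirichlet-type space and to produce $\mu$. Set $\Delta := T_1^*T_1 - I\ge 0$. The $2$-isometry identity yields $T_1^{*n}\Delta T_1^n = \Delta$ for all $n$, from which $\langle \Delta T_1^n e, T_1^m f\rangle$ depends only on $m-n$; hence the $\mathcal{B}(\mathcal{E}_T)$-valued sequence $C_k := P_{\mathcal{E}_T}T_1^{*k}\Delta|_{\mathcal{E}_T}$ (for $k\ge 0$, with $C_{-k}:=C_k^*$) is Toeplitz, and positivity of $\Delta$ makes the associated block Toeplitz kernel positive definite. By the operator-valued Herglotz (Bochner) theorem there is a positive $\mathcal{B}(\mathcal{E}_T)$-valued measure $\mu$ on $\mathbb{T}$ whose Fourier coefficients are the $C_k$. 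One then verifies the local Dirichlet-integral identity $\langle(M_z^*M_z - I)f, f\rangle = \int_{\mathbb{T}} d\langle\mu\,\hat f, \hat f\rangle$ on $\mathcal{R}$, matching the transported norm with the norm of $\mathcal{D}_{\mathcal{E}_T}(\mu)$; therefore $\mathcal{R}=\mathcal{D}_{\mathcal{E}_T}(\mu)$ and $M_z$ there is the desired analytic $2$-isometry. Assembling the two summands gives $T\cong U\oplus M_z$ on $\mathcal{H}_0\oplus\mathcal{D}_{\mathcal{E}_T}(\mu)$.

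The hard part will be the last step. Verifying that the Toeplitz kernel is genuinely positive definite (equivalently, that the associated operator function has nonnegative real part) is where the full force of the $2$-isometry relation, not merely left-invertibility, is used, and the passage from positive definiteness to a bona fide positive operator measure requires the operator Herglotz representation. Equally delicate is showing that $\mu$ reproduces the transported norm exactly---that the Dirichlet integral against $\mu$ recovers $\langle\Delta\,\cdot\,,\cdot\rangle$ after transport by $\Gamma$---so that $\Gamma$ is unitary onto $\mathcal{D}_{\mathcal{E}_T}(\mu)$ rather than merely contractive. This final isometry statement is the technical core, and it rests on the wandering subspace property supplied by Theorem~\ref{analytic+2-isometry}.
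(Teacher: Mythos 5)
Your attempt addresses a statement the paper never actually proves: Theorem \ref{olofsson-analytic} is quoted from \cite{Olofsson} ``for the sake of completeness,'' so the only meaningful comparison is with the cited proof and with the paper's parallel two-variable argument in Section 3. Measured against those, your outline is correct and follows the same skeleton: split off $\mathcal{H}_0=\bigcap_{n\geq 0}T^n\mathcal{H}$ as the unitary part, observe that the restriction to the complement is an analytic $2$-isometry, invoke Richter's Theorem \ref{analytic+2-isometry} for the wandering subspace property, and transport by the Shimorin-type map $\Gamma h=\sum_{n\geq 0}(P_{\mathcal{E}_T}L^{n}h)z^{n}$, with Richter's identity \eqref{norm identity} (the paper's Lemma 3.1) doing the work of making $\Gamma$ isometric and density of polynomials giving surjectivity. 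Where you genuinely depart from \cite{Olofsson} is the construction of $\mu$: you obtain it abstractly, from positive definiteness of the operator Toeplitz sequence $C_k=P_{\mathcal{E}_T}T^{*k}\Delta|_{\mathcal{E}_T}$ (where $\Delta=T^*T-I$) together with the operator-valued Herglotz theorem, whereas Olofsson --- and the paper, in \eqref{explicit - mu-i} --- constructs $\mu$ concretely: the identity $T^*\Delta T=\Delta$ says precisely that $Dx\mapsto DTx$ (with $D=\Delta^{1/2}$) is an isometry on the defect space $\mathcal{D}=\overline{D\mathcal{H}}$, which is extended to a unitary with spectral measure $E$, and one sets $\mu(\sigma)=P_{\mathcal{E}_T}DP_{\mathcal{D}}E(\sigma)D|_{\mathcal{E}_T}$. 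The two constructions yield the same measure --- the spectral formula has Fourier coefficients exactly your $C_k$, up to the usual conjugation conventions --- so nothing is lost mathematically, and your route is arguably more economical (no unitary dilation of the defect isometry is needed). What the explicit formula buys, however, is essential to this paper: the measures $\Tilde{\mu_1},\Tilde{\mu_2}$ of Section 3 are defined by compressing precisely that spectral expression to the joint kernel $\mathcal{E}$, and an abstract Herglotz measure would not hand over such a formula. One small caution: your displayed ``local Dirichlet-integral identity'' is stated heuristically; the precise statement to verify is that the transported norm equals $\|\cdot\|^2_{H^2}+D_\mu(\cdot)$, i.e.\ that $\sum_{k\geq 1}\|DL^{k}h\|^2$ equals the Dirichlet integral of $\Gamma h$ against $\mu$ --- you correctly flag this as the technical core, and it is exactly where the cited proof (and the paper's own Proposition 3.2, in two variables) spends its effort.
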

For the definition and properties of $\mathcal D_{\mathcal E_T}(\mu)$ space, we refer the reader to \cite{Richter1, Olofsson, Omar}.

A natural question which has been an interest to many operator theorist is that `Can an analogous result be obtained in multivariate setting?'
There are many interesting results for commuting isometries as well as for the class of pairs of commuting analytic $2$-isometries with some additional assumptions on the pair (see \cite{Slocinski, Ma, JS,  ABJS, Chavan-Reza, MRV} and references therein). It is presumable that some extra assumptions on pair of commuting $2$-isometries are necessary to achieve results analogous to Theorem \ref{analytic+2-isometry} and Theorem \ref{olofsson-analytic}. 
In this article, we shall consider the class of pairs of doubly commuting $2$-isometries. Here is the definition of doubly commuting tuple of operators.
An $n$-tuple of commuting bounded linear operators $T= (T_1,\ldots,T_n)$ acting on a Hilbert space $\mathcal{H}$ is said to be \textit{doubly commuting} if $ \displaystyle T_i^*T_j = T_jT_i^*$ for any $1 \leq i \neq j \leq n$. The natural examples of doubly commuting tuples are those of the co-ordinate wise multiplication operators 
on the Hardy space, the Bergman space, and the Dirichlet space over the bidisc $\mathbb{D}^2$ (defined in Section \ref{Preliminaries} and also see \cite{MRV}).  

A Wold-type decomposition for a pair of doubly commuting isometries is derived in \cite{Slocinski} (see also \cite{CPS, KO, Popovici, SSW}).
More precisely,
\begin{theorem}(M. Slocinski)\label{Slocinski} Let $V = (V_1, V_2)$ be a pair of doubly commuting isometries on
a Hilbert space $\mathcal{H}$. Then there exists a unique decomposition
\[ 
\mathcal{H} = \mathcal{H}_{ss} \oplus \mathcal{H}_{su} \oplus \mathcal{H}_{us} \oplus  \mathcal{H}_{uu},
\] where $\mathcal{H}_{ij}$ is joint $V$-reducing subspace of $\mathcal{H}$ for all $i, j = s, u.$ Moreover, $V_1$ on $\mathcal{H}_{ij}$ is a
shift if $i = s$ and unitary if $i = u$ and that $V_2$ is a shift if $j = s$ and unitary if $j = u$.
\end{theorem}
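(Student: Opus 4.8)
The plan is to bootstrap from the classical single-operator Wold decomposition, using double commutativity to guarantee that the pieces produced by $V_1$ are also reducing for $V_2$. First I would apply the classical Wold theorem to the isometry $V_1$, writing $\mathcal{H} = \mathcal{H}_1^s \oplus \mathcal{H}_1^u$, where $\mathcal{H}_1^u = \bigcap_{n \geq 0} V_1^n \mathcal{H}$ carries the unitary part and $\mathcal{H}_1^s = \bigvee_{n \geq 0} V_1^n \mathcal{W}_1$ (with $\mathcal{W}_1 = \mathcal{H} \ominus V_1 \mathcal{H}$) carries the shift part. The symmetric construction applied to $V_2$ yields $\mathcal{H}_2^u = \bigcap_{n \geq 0} V_2^n \mathcal{H}$ and its orthogonal complement $\mathcal{H}_2^s$.

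The decisive step is to show that $\mathcal{H}_1^u$ reduces $V_2$, and this is exactly where double commutativity enters. For $h = V_1^n g \in V_1^n \mathcal{H}$, commutativity gives $V_2 h = V_1^n (V_2 g) \in V_1^n \mathcal{H}$, so $V_2$ preserves each $V_1^n \mathcal{H}$ and hence their intersection $\mathcal{H}_1^u$. For $V_2^*$, the relation $V_2^* V_1 = V_1 V_2^*$ coming from the doubly commuting hypothesis gives $V_2^* V_1^n = V_1^n V_2^*$, whence $V_2^* h = V_1^n (V_2^* g) \in V_1^n \mathcal{H}$; thus $V_2^*$ also preserves $\mathcal{H}_1^u$. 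Therefore $\mathcal{H}_1^u$, and so its orthogonal complement $\mathcal{H}_1^s$, reduces $V_2$; by symmetry $\mathcal{H}_2^u$ and $\mathcal{H}_2^s$ reduce $V_1$. I expect this to be the main obstacle: without the hypothesis $V_1^* V_2 = V_2 V_1^*$ there is no reason for $V_2^*$ to leave $V_1^n \mathcal{H}$ invariant, and the whole scheme collapses.

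With all four subspaces $\mathcal{H}_1^s, \mathcal{H}_1^u, \mathcal{H}_2^s, \mathcal{H}_2^u$ reducing for both operators, I would set $\mathcal{H}_{ss} = \mathcal{H}_1^s \cap \mathcal{H}_2^s$, $\mathcal{H}_{su} = \mathcal{H}_1^s \cap \mathcal{H}_2^u$, $\mathcal{H}_{us} = \mathcal{H}_1^u \cap \mathcal{H}_2^s$, and $\mathcal{H}_{uu} = \mathcal{H}_1^u \cap \mathcal{H}_2^u$, each a joint $V$-reducing subspace as an intersection of reducing subspaces. To see that these four pieces form an orthogonal direct sum decomposition of $\mathcal{H}$, I note that the orthogonal projection $P$ onto $\mathcal{H}_1^s$ commutes with $V_2$ and $V_2^*$ (since $\mathcal{H}_1^s$ reduces $V_2$), so $P$ maps $\mathcal{H}_2^u = \bigcap_n V_2^n \mathcal{H}$ into itself; this yields $\mathcal{H}_2^u = \mathcal{H}_{su} \oplus \mathcal{H}_{uu}$ and likewise $\mathcal{H}_2^s = \mathcal{H}_{ss} \oplus \mathcal{H}_{us}$, and combining with $\mathcal{H} = \mathcal{H}_2^s \oplus \mathcal{H}_2^u$ gives the fourfold splitting.

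It remains to identify the restricted behavior and to argue uniqueness. For the behavior I would use the characterization of a shift as a \emph{pure} isometry: if $M$ reduces an isometry $V$ then $\bigcap_n (V|_M)^n M = M \cap \bigcap_n V^n \mathcal{H}$, so the restriction of the shift $V_1|_{\mathcal{H}_1^s}$ to the reducing subspaces $\mathcal{H}_{ss}, \mathcal{H}_{su}$ remains a shift, while the restriction of the unitary $V_1|_{\mathcal{H}_1^u}$ to $\mathcal{H}_{us}, \mathcal{H}_{uu}$ remains unitary; the analogous statement for $V_2$ handles the second index. Finally, uniqueness follows because $\mathcal{H}_1^u = \bigcap_n V_1^n \mathcal{H}$ and $\mathcal{H}_2^u = \bigcap_n V_2^n \mathcal{H}$ are intrinsically determined (they are the maximal reducing subspaces on which $V_1$, respectively $V_2$, acts as a unitary), so any decomposition with the stated properties must satisfy $\mathcal{H}_{us} \oplus \mathcal{H}_{uu} = \mathcal{H}_1^u$ and $\mathcal{H}_{su} \oplus \mathcal{H}_{uu} = \mathcal{H}_2^u$, together with their complements, forcing each $\mathcal{H}_{ij}$ to coincide with the intersection defined above.
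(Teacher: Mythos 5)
Your proof is correct. One point of orientation: the paper never proves this statement itself --- it quotes it as S\l oci\'nski's theorem from the literature and only remarks that it can be recovered as a corollary of its main result, Theorem \ref{wold-type decomposition}, the generalization to doubly commuting $2$-isometries. So the natural comparison is with the paper's proof of that theorem, and there the engine is the same as yours: double commutativity forces the Wold pieces of one operator to reduce the other. The organization, however, differs. The paper nests the one-variable decompositions: it splits $\mathcal{H}$ using $T_1$, invokes Lemma \ref{reducing} to see that both summands reduce $T_2$, and then applies the one-variable theorem again to the restrictions of $T_2$ to each summand; the fourfold orthogonal splitting then comes for free, but identities like Lemma \ref{reducing}(b) are needed afterwards to rewrite the resulting pieces as intersections, e.g.\ $\mathcal{H}_{10}=\big(\bigvee_{m\geq 0} T_1^m\mathcal{E}_1\big)\cap\big(\bigcap_{n\geq 0} T_2^n\mathcal{H}\big)$. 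You do the opposite: you take the two global Wold decompositions, for $V_1$ and for $V_2$, and intersect them, so your four subspaces are intrinsic from the outset and uniqueness drops out of the one-variable uniqueness almost immediately; the price is the commuting-projection argument needed to show that the four intersections exhaust $\mathcal{H}$, a step the nested approach never faces. Note also that your identification of the restricted operators uses the characterization of a shift as a pure isometry (via $\bigcap_n (V|_M)^n M = M\cap\bigcap_n V^n\mathcal{H}$ for reducing $M$), which is special to isometries; that is precisely where the paper's $2$-isometric version must work harder, replacing ``shift'' by ``analytic $2$-isometry'' modeled as $M_z$ on a Dirichlet-type space. Both routes are sound: yours is the more elementary and self-contained argument for the isometry statement, while the paper's route buys the stronger theorem of which this one is a special case.
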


In Theorem \ref{wold-type decomposition}, we establish a Wold-type decomposition for any pair of doubly commuting $2$-isometries acting on a Hilbert space as a generalization of Theorem \ref{olofsson-analytic}. The essence of analytic structure in it also gives a more finer version of Theorem 5.1 in \cite{NZ}.  
The novelty of this work lies in the treatment of a general pair of doubly commuting $2$-isometries, which are not necessarily cyclic or analytic, and their connection to shifts on Dirichlet spaces. As a corollary of this theorem one can derive Theorem \ref{Slocinski}.   


Before we turn our attention to Theorem \ref{wold-type decomposition}, in Section \ref{Preliminaries} we note a few notations and preliminaries.

\section{Preliminaries} \label{Preliminaries}
    Let $\mu_1$ and $\mu_2$ to be two positive $\mathcal{B}(\mathcal{E})$-valued operator measures defined on the unit circle $\mathbb{T}$, where $\mathcal{E}$ is a Hilbert space. For $i=1,2$, the Poisson integral of $\mu_i$ is denoted by $P[\mu_i]$ and is given by  
    \[
    P[\mu_i](z_i) =  \int_{\mathbb{T}} P(z_i,e^{i\theta}) d\mu_i(e^{i\theta})
    \] where $P(z_i,e^{i\theta}) = \frac{(1-|z_i|^2)}{|e^{i\theta}-z_i|^2}$ for any $z_i \in \mathbb{D}$ and $\theta\in\mathbb R.$ Note that $P[\mu_i](z_i)$ is a positive operator in $\mathcal{B}({\mathcal{E}}).$  Let the space of all $\mathcal{E}$-valued analytic functions on the bidisc be denoted by $\mathcal{O}(\mathbb{D}^2, \mathcal{E})$. Now, for any $ f \in \mathcal{O}(\mathbb{D}^2, \mathcal{E})$, we define the terms $D^{(2)}_{\mu_1,\mu_2, i}(f)$ for $i=1,2,3,$  
\begin{align*}
     D^{(2)}_{\mu_1,\mu_2,1}(f) &:= \lim_{r \rightarrow 1^-}  \iint_{\mathbb{D}\times \mathbb{T}} \left\langle P[\mu_1](z_1) \frac{\partial f}{\partial z_1}(z_1,re^{it}), \frac{\partial f}{\partial z_1}(z_1,re^{it}) \right\rangle_{\mathcal{E}} \,\,dt ~~ dA(z_1) \\
     D^{(2)}_{\mu_1,\mu_2,2}(f) &:= \lim_{r \rightarrow 1^-}  \iint_{\mathbb{T} \times \mathbb{D}} \left\langle P[\mu_2](z_2) \frac{\partial f}{\partial z_2}(re^{it},z_2), \frac{\partial f}{\partial z_2}(re^{it},z_2) \right\rangle_{\mathcal{E}} \,\, dA(z_2) ~~dt \\
     D^{(2)}_{\mu_1,\mu_2,3}(f) &:=   \iint_{\mathbb{\mathbb{D}} \times \mathbb{D}} \left\langle P[\mu_1](z_1)  P[\mu_2](z_2) \frac{\partial^2 f}{\partial z_1\partial z_2}(z_1,z_2), \frac{\partial^2 f}{\partial z_1 \partial z_2}(z_1,z_2) \right\rangle_{\mathcal{E}} \,\, dA(z_1) ~~dA(z_2),
\end{align*}
    where $dt$ denotes the normalized arc length measure  and $dA$ denotes the normalized area measure. 
    Furthermore, we define the Dirichlet-type integral of a function $f$ on bidisc as 
    \[ D^{(2)}_{\mu_1,\mu_2}(f):= D^{(2)}_{\mu_1,\mu_2,1}(f) + D^{(2)}_{\mu_1,\mu_2,2}(f)+D^{(2)}_{\mu_1,\mu_2,3}(f).
    \]
    Combining all these, the \textit{Dirichlet-type space over bidisc for the operator-valued measures ${\mu_1,\mu_2}$} on $\mathbb{T}$, denoted by $\mathcal{D}^2_{\mathcal{E}}({\mu_1,\mu_2})$, is defined to be 
    the space of all $\mathcal{E}$-valued analytic functions $f$ such that $D^{(2)}_{\mu_1,\mu_2}(f)$ is finite.
    The norm of $f \in \mathcal{D}^2_{\mathcal{E}}(\mu_1,\mu_2)$ is denoted by $\|f\|_{\mu_1,\mu_2}$ and is given by 
\begin{equation}\label{norm of D-mu}
    \|f\|_{\mu_1,\mu_2}^2 := \|f\|_{H^2(\mathbb{D}^2)}^2 + D^{(2)}_{\mu_1,\mu_2,1}(f) +  D^{(2)}_{\mu_1,\mu_2,2}(f) +  D^{(2)}_{\mu_1,\mu_2,3}(f),
\end{equation}
   The space $\mathcal{D}^2_{\mathcal{E}}(\mu_1,\mu_2)$ turns out to be a Hilbert space with respect to the norm given in \eqref{norm of D-mu}. In \cite{MRV}, the scalar-valued Dirichlet-type space (that is, when $\mathcal{E} = \mathbb{C}$) is studied in some detail. 
    
    Let $\mathbb Z_{\geq 0}$ denote the set of all non-negative integers. Throughout the article, for $m,p \in \mathbb{Z}_{\geq 0},$ we adopt  $(m \wedge p)$ as a concise notation for  $\min\{m,p\}.$
   For any $f \in  \mathcal{D}^2_{\mathcal{E}}({\mu_1,\mu_2})$ with $f(z_1,z_2) = \sum_{m,n =0}^\infty a_{m,n} z_1^m z_2^n$, the following identities can easily be derived (see \cite{MRV} for scalar-valued case) 
\begin{align*}
    \|f\|_{H^2(\mathbb{D}^2)}^2 = \sum_{m,n=0}^\infty \|a_{m,n}\|^2
\end{align*} 
\begin{equation}\label{dirichlet integral_1}
    D^{(2)}_{\mu_1,\mu_2,1}(f) = \lim_{r \rightarrow 1^-} \sum_{m,p=1}^\infty \sum_{n=0}^\infty (m \wedge p) \langle \hat{\mu_1}(p-m) a_{m,n}, a_{p,n}\rangle r^{2n} 
\end{equation}
\begin{equation}\label{dirichlet integral_2}
     D^{(2)}_{\mu_1,\mu_2,2}(f) = \lim_{r \rightarrow 1^-}  \sum_{m=0}^\infty \sum_{n,q=1}^\infty (n \wedge q) \langle \hat{\mu_2}(n-q) a_{m,n}, a_{m,q}\rangle r^{2m} 
\end{equation}
\begin{equation}\label{dirichlet integral_3}
     D^{(2)}_{\mu_1,\mu_2,3}(f) =  \sum_{m,n=1}^\infty \sum_{p,q=1}^\infty (m \wedge p) (n \wedge q) \langle \hat{\mu_2}(q-n)\hat{\mu_1}(p-m) a_{m,n}, a_{p,n}\rangle, 
\end{equation} 
where $n^{th}$-Fourier coefficient of the measure $\mu$ is defined by 
\[\hat{\mu}(n):=\int_{\mathbb T}\overline{x}^n d\mu(x).\]
     Let $M_z=(M_{z_1},M_{z_2}),$ where each $M_{z_i}$ corresponds to multiplication by the co-ordinate function $z_i$ for $i=1,2.$
     Note that, the operators $M_{z_1}$ and $M_{z_2}$ are bounded linear operators on $\mathcal{D}^2_{\mathcal{E}}(\mu_1,\mu_2)$. Moreover, we have the following theorem. Since the proof of the theorem is similar to that of the scalar-valued case which is available in \cite[Theorem 3.12]{MRV}, we omit it. 
\begin{theorem}
    For any $f \in \mathcal{O}(\mathbb{D}^2, \mathcal{E}),$ the following statements are equivalent.
    \begin{enumerate}
        \item $f \in \mathcal{D}^2_{\mathcal{E}}(\mu_1,\mu_2)$
        \item $z_1f \in \mathcal{D}^2_{\mathcal{E}}(\mu_1,\mu_2)$
        \item $z_2f \in \mathcal{D}^2_{\mathcal{E}}(\mu_1,\mu_2). $
    \end{enumerate}
\end{theorem}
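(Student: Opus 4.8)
The plan is to prove the three-way equivalence by establishing only $(1)\Leftrightarrow(2)$, since $(1)\Leftrightarrow(3)$ then follows verbatim after interchanging the roles of the two coordinates together with the two measures $\mu_1,\mu_2$. So I fix $f(z_1,z_2)=\sum_{m,n\ge0}a_{m,n}z_1^mz_2^n$ and set $g=z_1f$, whose Taylor coefficients are $b_{m,n}=a_{m-1,n}$ for $m\ge1$ and $b_{0,n}=0$. The entire argument rests on comparing the four summands in \eqref{norm of D-mu} for $g$ with those for $f$, using the closed formulas \eqref{dirichlet integral_1}--\eqref{dirichlet integral_3} together with the elementary identity $(m+1)\wedge(p+1)=(m\wedge p)+1$.

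First I would record the two easy identities. Shifting the summation index shows at once that $\|g\|_{H^2(\mathbb{D}^2)}=\|f\|_{H^2(\mathbb{D}^2)}$, and, because the extra factor $r^2$ produced by the shift tends to $1$, that $D^{(2)}_{\mu_1,\mu_2,2}(g)=D^{(2)}_{\mu_1,\mu_2,2}(f)$. The content lies in the remaining two terms, where the reindexing leaves behind a cross term coming from the $+1$ in the identity above:
\begin{align*}
D^{(2)}_{\mu_1,\mu_2,1}(g) &= D^{(2)}_{\mu_1,\mu_2,1}(f)+E_1,\qquad E_1:=\lim_{r\to1^-}\sum_{n\ge0}r^{2n}\sum_{m,p\ge0}\langle\hat{\mu_1}(p-m)a_{m,n},a_{p,n}\rangle,\\
D^{(2)}_{\mu_1,\mu_2,3}(g) &= D^{(2)}_{\mu_1,\mu_2,3}(f)+E_3,\qquad E_3:=\sum_{m,p\ge0}\sum_{n,q\ge1}(n\wedge q)\langle\hat{\mu_2}(q-n)\hat{\mu_1}(p-m)a_{m,n},a_{p,q}\rangle.
\end{align*}

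The second step is positivity. Since $[\hat{\mu_1}(p-m)]_{m,p}$ is the Toeplitz moment matrix of the positive measure $\mu_1$, it is a positive operator kernel, so $E_1\ge0$; likewise $[(n\wedge q)\hat{\mu_2}(q-n)]_{n,q}$ is the single-variable Dirichlet form of $\mu_2$ and hence a positive kernel, and since $\hat{\mu_1}$ and $\hat{\mu_2}$ commute in the doubly commuting setting, $E_3$ is the quadratic form of the Schur product of two positive kernels and so $E_3\ge0$. As each of $D^{(2)}_{\mu_1,\mu_2,1}(f)$, $D^{(2)}_{\mu_1,\mu_2,3}(f)$, $E_1$, $E_3$ is individually nonnegative, the displayed identities give $\|g\|_{\mu_1,\mu_2}^2=\|f\|_{\mu_1,\mu_2}^2+E_1+E_3$ and show that every summand of $\|f\|_{\mu_1,\mu_2}^2$ is dominated by the corresponding summand of $\|g\|_{\mu_1,\mu_2}^2$; this already yields $(2)\Rightarrow(1)$.

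For the converse $(1)\Rightarrow(2)$ I must bound $E_1$ and $E_3$ from above by a fixed multiple of $\|f\|_{\mu_1,\mu_2}^2$, and this is where the main obstacle lies. The crucial input is the one-variable boundedness of $M_z$ on $\mathcal{D}_{\mathcal{E}}(\mu_i)$, which amounts to an operator inequality of the shape $[\hat{\mu_i}(p-m)]_{m,p}\le C_i\big(I+[(m\wedge p)\hat{\mu_i}(p-m)]_{m,p}\big)$. Applied in the $z_1$-variable against the analytic slices $\phi_n(x)=\sum_m a_{m,n}x^m$ and summed over $n$ with weights $r^{2n}$, it gives $E_1\le C_1\big(\|f\|_{H^2(\mathbb{D}^2)}^2+D^{(2)}_{\mu_1,\mu_2,1}(f)\big)$. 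For $E_3$ the same one-variable inequality is tensored with the positive Dirichlet kernel of $\mu_2$ in the $(n,q)$ variables; since tensoring an operator inequality with a positive kernel preserves it (again using that $\hat{\mu_1}$ and $\hat{\mu_2}$ commute), this yields $E_3\le C_1\big(D^{(2)}_{\mu_1,\mu_2,2}(f)+D^{(2)}_{\mu_1,\mu_2,3}(f)\big)$. Combining the two bounds gives $\|z_1f\|_{\mu_1,\mu_2}^2\le C\|f\|_{\mu_1,\mu_2}^2$ and completes $(1)\Rightarrow(2)$. The only points requiring care are the interchange of $\lim_{r\to1^-}$ with the summations, which is handled by monotone convergence since each inner sum is a nonnegative Dirichlet-type form increasing in $r$, and the commutativity of the operator measures, which is automatic for scalar measures and holds for the operator-valued measures arising from a doubly commuting pair.
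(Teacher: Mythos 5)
Your overall strategy is the natural one and, as far as one can tell, matches the scalar-valued argument of \cite[Theorem 3.12]{MRV} to which the paper defers (the paper itself omits the proof entirely): shift the Taylor coefficients, use $(m+1)\wedge(p+1)=(m\wedge p)+1$ to split each Dirichlet term of $z_1f$ into the corresponding term of $f$ plus a cross term, obtain $(2)\Rightarrow(1)$ from nonnegativity of all summands, and obtain $(1)\Rightarrow(2)$ by dominating the cross terms through the one-variable boundedness of $M_z$ on $\mathcal{D}_{\mathcal{E}}(\mu_1)$ applied to slices. The identities $\|z_1f\|_{H^2(\mathbb{D}^2)}=\|f\|_{H^2(\mathbb{D}^2)}$ and $D^{(2)}_{\mu_1,\mu_2,2}(z_1f)=D^{(2)}_{\mu_1,\mu_2,2}(f)$, the formulas for $E_1$ and $E_3$, the positivity of $E_1$, and the equivalence of your kernel inequality with boundedness of the one-variable shift are all correct, as is the handling of the $r\to1^-$ limits by monotonicity.

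The genuine gap is your reliance on commutativity of $\hat{\mu_1}(j)$ with $\hat{\mu_2}(l)$. The theorem is a statement about the space $\mathcal{D}^2_{\mathcal{E}}(\mu_1,\mu_2)$ for two \emph{arbitrary} positive $\mathcal{B}(\mathcal{E})$-valued measures; no doubly commuting operator pair is in sight, so "the doubly commuting setting" is not a hypothesis you may invoke. Commutativity is essential at both places you use it: (i) positivity of $E_3$ --- the relevant quadratic form is built from quantities $\langle \hat{\mu_2}(\cdot)\hat{\mu_1}(\cdot)a,b\rangle$, and for noncommuting positive operators $A,B$ the number $\langle BAu,u\rangle$ need not even be real (easy $2\times 2$ examples exist), so $E_3\geq 0$ genuinely fails; and (ii) the claim that Schur-tensoring the operator inequality $[\hat{\mu_1}(p-m)]\le C_1\big(I+[(m\wedge p)\hat{\mu_1}(p-m)]\big)$ with the positive kernel $[(n\wedge q)\hat{\mu_2}(q-n)]$ preserves it --- for operator-valued kernels this is false in general, and even when the two families commute it requires an argument (e.g., that multiplication $\mathcal{A}\otimes_{\max}\mathcal{B}\to\mathcal{B}(\mathcal{E})$ is a $*$-homomorphism for elementwise-commuting C*-algebras, hence completely positive), not merely a citation of the Schur product theorem. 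Your fallback remark that commutativity "holds for the operator-valued measures arising from a doubly commuting pair" is also unproven and not obvious: the measures $\Tilde{\mu_i}(\sigma)=PD_iP_{\mathcal{D}_i}E_i(\sigma)D_iP$ constructed later in the paper are compressions, and compressions of commuting operators need not commute. To be fair, the same subtlety is latent in the paper's own definition of $D^{(2)}_{\mu_1,\mu_2,3}$, where the product $P[\mu_1](z_1)P[\mu_2](z_2)$ of two positive operators is not positive, or even self-adjoint, unless they commute; you have surfaced a real issue the paper glosses over. But as written, your argument proves the theorem only for scalar measures or for measures with mutually commuting values; a complete proof of the stated result must either add that hypothesis explicitly or treat the noncommutative case by a different route.
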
 
    We also list out some of the properties of the vector-valued Dirichlet-type spaces in the following theorem. We do not provide the proofs since it is verbatim with the scalar-valued case. For more details we refer the reader to \cite{MRV}. 
    \begin{theorem}
    Suppose $\mu_1$ and $\mu_2$ are two positive $\mathcal B(\mathcal E)$-valued operator measure with $\mathcal{E}$ being a Hilbert space. Then  
    \begin{itemize}
        \item[(a)] the set of all polynomials in two variables with coefficients from $\mathcal{E}$ is a dense set in $\mathcal{D}^2_{\mathcal{E}}(\mu_1,\mu_2)$, 
        \item[(b)] the multiplication operators $M_{z_1},M_{z_2}$  are analytic $2$-isometries on $\mathcal{D}^2_{\mathcal{E}}(\mu_1,\mu_2)$. 
    \end{itemize}    
    \end{theorem}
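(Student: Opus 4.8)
For part (b) I would verify the defining identity $\|z_i^2 f\|^2-2\|z_i f\|^2+\|f\|^2=0$ directly from the coefficient formulas. Since the theorem stated just above guarantees that each of $M_{z_1}$ and $M_{z_2}$ maps $\mathcal{D}^2_{\mathcal{E}}(\mu_1,\mu_2)$ into itself, the identities \eqref{dirichlet integral_1}--\eqref{dirichlet integral_3} may be applied to $f$, $z_i f$ and $z_i^2 f$ alike; I write $f(z_1,z_2)=\sum_{m,n}a_{m,n}z_1^m z_2^n$ throughout and use repeatedly that each Dirichlet form is a \emph{non-negative} quadratic form in the coefficients, its integral representation having an integrand of the form $\langle P[\mu_i](\cdot)g,g\rangle\ge 0$.

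Multiplication by $z_1$ shifts the first index, $(z_1 f)_{m,n}=a_{m-1,n}$, and affects the weights only through $(m+1)\wedge(p+1)=(m\wedge p)+1$. Carrying this through \eqref{dirichlet integral_1}--\eqref{dirichlet integral_3} (the term \eqref{dirichlet integral_2} being unchanged in the limit $r\to 1^-$ because the extra factor $r^{2}$ tends to $1$) yields
\[
\|z_1 f\|_{\mu_1,\mu_2}^2-\|f\|_{\mu_1,\mu_2}^2=Q_1(f)+Q_3(f),
\]
where
\[
Q_1(f)=\lim_{r\to 1^-}\sum_{m,p\ge 0}\sum_{n\ge 0}\big\langle\hat{\mu_1}(p-m)a_{m,n},a_{p,n}\big\rangle r^{2n},
\]
\[
Q_3(f)=\sum_{m,p\ge 0}\sum_{n,q\ge 1}(n\wedge q)\big\langle\hat{\mu_2}(q-n)\hat{\mu_1}(p-m)a_{m,n},a_{p,q}\big\rangle .
\]
The crucial observation is that both $Q_1$ and $Q_3$ are \emph{invariant} under the shift $a_{m,n}\mapsto a_{m-1,n}$: relabelling the first index leaves the sums unchanged because $\hat{\mu_1}(p-m)$ depends only on the difference $p-m$, while the surviving factors do not involve the first index at all. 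Writing $R_1(f):=\|z_1 f\|^2-\|f\|^2=Q_1(f)+Q_3(f)$, this invariance gives $R_1(z_1 f)=R_1(f)$, whence $\|z_1^{\,n}f\|^2=\|f\|^2+nR_1(f)$ is affine in $n$; in particular $\|z_1^2 f\|^2-2\|z_1 f\|^2+\|f\|^2=R_1(z_1 f)-R_1(f)=0$, so $M_{z_1}$ is a $2$-isometry. Interchanging the roles of $(\mu_1,z_1)$ and $(\mu_2,z_2)$ (and of \eqref{dirichlet integral_1} with \eqref{dirichlet integral_2}) gives the same conclusion for $M_{z_2}$. Analyticity is then immediate, since any element of $\bigcap_{k\ge 0}z_i^{\,k}\,\mathcal{D}^2_{\mathcal{E}}(\mu_1,\mu_2)$ is divisible by $z_i^{\,k}$ for every $k$ and hence vanishes identically.

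For part (a) I would approximate by dilation followed by Taylor truncation. For $0<\rho<1$ set $f_\rho(z_1,z_2)=f(\rho z_1,\rho z_2)$, whose coefficients $\rho^{m+n}a_{m,n}$ decay geometrically, so that the $H^2$-sum and all three Dirichlet sums for $f_\rho$ converge absolutely and the Taylor polynomials of $f_\rho$ converge to $f_\rho$ in $\|\cdot\|_{\mu_1,\mu_2}$; it then remains only to prove $\|f_\rho-f\|_{\mu_1,\mu_2}\to 0$ as $\rho\to 1^-$. The $H^2$-part tends to $0$ by dominated convergence on $\ell^2$, and the main obstacle is the corresponding statement for the three Dirichlet forms, each of which carries off-diagonal factors $\hat{\mu_j}(\cdot)$ and is therefore not a diagonal weighted $\ell^2$-norm amenable to term-by-term estimation. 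I would resolve this using positivity: since each $D^{(2)}_{\mu_1,\mu_2,i}$ is a non-negative form, its polarization
\[
D^{(2)}_{\mu_1,\mu_2,i}(f-f_\rho)=D^{(2)}_{\mu_1,\mu_2,i}(f)-2\,\mathrm{Re}\,D^{(2)}_{\mu_1,\mu_2,i}(f,f_\rho)+D^{(2)}_{\mu_1,\mu_2,i}(f_\rho)
\]
is driven to $0$ once one shows $D^{(2)}_{\mu_1,\mu_2,i}(f_\rho)\to D^{(2)}_{\mu_1,\mu_2,i}(f)$ and $D^{(2)}_{\mu_1,\mu_2,i}(f,f_\rho)\to D^{(2)}_{\mu_1,\mu_2,i}(f)$, each following from a monotone/dominated convergence argument on the integral representation of the form (the factor $(1-\rho^{m+n})\to 0$ pointwise while the integrands stay dominated by those of $f$). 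The genuinely delicate point is the interchange of the dilation limit $\rho\to 1^-$ with the regularizing limit $r\to 1^-$ present in \eqref{dirichlet integral_1}--\eqref{dirichlet integral_2}, which is justified by the monotonicity and positivity of the summands; granting this, the two stages together show that $\mathcal{E}$-valued polynomials are dense, completing the proof.
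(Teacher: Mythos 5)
First, a point of reference: the paper does not actually write out a proof of this theorem --- it declares the proof to be ``verbatim'' with the scalar-valued case and defers to \cite[Theorem 3.12, Lemma 3.14]{MRV} --- so your argument has to stand on its own merits. Your part (b) does. The bookkeeping is correct: multiplication by $z_1$ shifts only the first index, raises the weight $(m\wedge p)$ to $(m\wedge p)+1$ in \eqref{dirichlet integral_1} and \eqref{dirichlet integral_3}, and leaves \eqref{dirichlet integral_2} and the $H^2$ term unchanged (the stray factor $r^2$ is harmless since the limits exist), so $R_1(f):=\|z_1f\|^2_{\mu_1,\mu_2}-\|f\|^2_{\mu_1,\mu_2}=Q_1(f)+Q_3(f)$; since $Q_1$ and $Q_3$ see the first index only through the differences $p-m$, $R_1$ is invariant under $f\mapsto z_1f$, and the $2$-isometry identity follows. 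Two things you should make explicit: $Q_1(f)$ exists as a limit because it is the difference of the two existing limits $D^{(2)}_{\mu_1,\mu_2,1}(z_1f)$ and $D^{(2)}_{\mu_1,\mu_2,1}(f)$, and the invariance-of-the-space theorem you invoke to place $z_1f$ and $z_1^2f$ in $\mathcal{D}^2_{\mathcal{E}}(\mu_1,\mu_2)$ is itself only quoted in the paper. The analyticity argument via vanishing Taylor coefficients is also fine.

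Part (a) contains a genuine gap, located exactly at the step you yourself flag as ``genuinely delicate'': the claim $\|f_\rho-f\|_{\mu_1,\mu_2}\to0$ as $\rho\to1^-$. The justification you offer --- monotone/dominated convergence because ``the integrands stay dominated by those of $f$'' --- is false. On the integral side, $\frac{\partial f_\rho}{\partial z_1}(z_1,z_2)=\rho\,\frac{\partial f}{\partial z_1}(\rho z_1,\rho z_2)$ evaluates the derivative of $f$ at a \emph{different} point, and there is no pointwise domination of $|\partial f(\rho z)|$ by $|\partial f(z)|$ (any zero of $\partial f$ defeats such an inequality). On the coefficient side, the forms \eqref{dirichlet integral_1}--\eqref{dirichlet integral_3} are not diagonal: the individual summands $\langle\hat{\mu_j}(p-m)a_{m,n},a_{p,n}\rangle$ carry no fixed sign, so neither monotone nor dominated convergence can be applied term by term, and no monotonicity of $D^{(2)}_{\mu_1,\mu_2,i}(f_\rho)$ in $\rho$ is available once off-diagonal terms are present; positivity of the \emph{total} quadratic form does not license the interchange of the dilation limit $\rho\to1^-$ with the Abel limits $r\to1^-$. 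This is not a removable technicality: already in one variable, convergence of dilations in $\mathcal{D}_{\mathcal{E}}(\mu)$ (equivalently, density of polynomials there) is a theorem of Richter \cite{Richter1}, and the known proofs (see also \cite{Omar}) go through the local Douglas/Richter--Sundberg formula for the local Dirichlet integral precisely because the naive convergence-theorem argument fails. Put differently, your scheme reduces to proving $\sup_{\rho<1}\|f_\rho\|_{\mu_1,\mu_2}<\infty$ and $\limsup_{\rho\to1^-}\|f_\rho\|_{\mu_1,\mu_2}\le\|f\|_{\mu_1,\mu_2}$ (coefficientwise convergence then upgrades weak convergence with convergence of norms to norm convergence); those two estimates are the real content of \cite[Lemma 3.14]{MRV}, and your proposal assumes them rather than proves them.
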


\section{von Neumann Wold-type decomposition for Analytic pair}
    In this section, we deal with a pair of doubly commuting analytic $2$-isometries on a Hilbert space and completely characterize them in terms of shifts on Dirichlet-type spaces over bidisc. 
    
    We begin with a pair of commuting analytic $2$-isometries $(T_1, T_2)$ defined on a Hilbert space $\mathcal{H}$. Using the definition of $2$-isometry, it is straightforward to show that $T_1$ and $T_2$ are left-invertible operators and we consider the left-inverses $L_i:=(T_i^*T_i)^{-1}T_i^*$ for $i=1,2.$ Also, note that for each $i=1,2$, the operator $P_i:= I - T_iL_i$ is the orthogonal projection of $\mathcal{H}$ onto $\mathcal{E}_i :=\mathcal{H}\ominus T_i(\mathcal{H})=\ker T_i^*$. The defect operator corresponding to $T_i$ is $D_i:=(T_i^*T_i-I)^{1/2}$, the corresponding defect space is  $\mathcal{D}_i:=\overline{D_i(\mathcal{H})}$ and $P_{\mathcal{D}_i}$ is the orthogonal projection onto $\mathcal{D}_i$. For this pair $(T_1,T_2)$ on $\mathcal{H}$, the $2$-isometric property of each $T_i$ on $\mathcal{H}$ induces the map $\Tilde{T_i}:\mathcal{D}_i \rightarrow \mathcal{D}_i$, defined by $D_ix \mapsto D_iT_ix$, which turns out to be an isometry and therefore it has a unitary extension on some Hilbert space $\mathcal{K}$. 
    Moreover, using spectral theorem,  for each integer $m \geq 0$, we have 
     $$ 
     \tilde{T_i}^m = \int_{\mathbb{T}} e^{im\theta} dE_i(e^{i\theta}),
     $$ where $E_i:\Omega \rightarrow \mathcal{B}(\mathcal{K})$ denotes the spectral measure corresponding to the unitary extension of $\Tilde{T_i}$. A direct application of Theorem \ref{olofsson-analytic} (see also \cite[Theorem 4.1]{Olofsson}) gives the existence of positive $\mathcal{B}(\mathcal{E}_i)$-valued operator measures $\mu_i$ defined on $\mathbb{T}$ by      \begin{eqnarray}\label{explicit - mu-i}
         \mu_i(\sigma) = P_iD_iP_{\mathcal{D}_i}E_i(\sigma)D_i \vert_{\mathcal{E}_i}
     \end{eqnarray} 
     such that $T_i$ is unitarily equivalent to $M_z$ on $\mathcal D_{\mathcal E_i}(\mu_i).$ 
     For more details, we refer \cite{Olofsson} to the reader.

     If the pair $(T_1,T_2)$ is doubly commuting, that is, $T_1T_2 = T_2T_1$ and $T_1T_2^* = T_2^*T_1$ then 
     \[ 
     (I - T_1L_1)(I-T_2L_2) = (I-T_2L_2)(I-T_1L_1),
     \] 
     that is, the orthogonal projections $P_1=I - T_1L_1$ and $P_2=I - T_2L_2$ are commutative. Hence, the product $P:=P_1P_2$ is also an orthogonal projection onto $\mathcal E:= \ker T_1^* \cap \ker T_2^*$  and due to \cite[Lemma 4.1]{MRV}, the subspace $\mathcal{E}$ is a non-trivial subspace of $\mathcal{H}$. We consider for $i=1,2$, $\Tilde{\mu_i}(\sigma) := PD_iP_{\mathcal{D}_i}E_i(\sigma)D_i \vert_{\mathcal{E}}$ which turns out to be positive $\mathcal{B}(\mathcal{E})$-valued operator measures defined on $\mathbb{T}$.

    In the following lemma, we recall an important operator identity for analytic $2$-isometry, proved in \cite{Richter2}. 
    \begin{lemma}
    Let $S \in \mathcal{B}(\mathcal{H})$ be an analytic $2$-isometry. Then the operators $L_S := (S^*S)^{-1}S^*$, $P_S= I - SL_S$ and $D_S = (S^*S - I)^{\frac{1}{2}}$ satisfy
\begin{equation}\label{norm identity}
    \|x\|^2=\sum_{k=0}^\infty \|P_SL_S^kx\|^2 + \sum_{k=1}^\infty \|D_SL_S^kx\|^2 \quad \quad \quad ~~  \text{for all} ~~ x \in \mathcal{H}.
\end{equation}
\end{lemma}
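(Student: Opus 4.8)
The plan is to prove \eqref{norm identity} by isolating a single orthogonal decomposition of $\mathcal{H}$, iterating it to get a finite telescoping identity with an explicit remainder, and then showing that the remainder vanishes because $S$ is analytic. First I would record two elementary facts. Since $SL_S = S(S^*S)^{-1}S^*$ is the orthogonal projection of $\mathcal{H}$ onto $S\mathcal{H}$ and $P_S = I - SL_S$ is the complementary orthogonal projection onto $\ker S^* = \mathcal{H}\ominus S\mathcal{H}$, every $y\in\mathcal{H}$ splits orthogonally as $y = P_S y + SL_S y$, so $\|y\|^2 = \|P_S y\|^2 + \|SL_S y\|^2$. On the other hand the $2$-isometric identity $S^*S = I + D_S^2$ gives $\|Sz\|^2 = \|z\|^2 + \|D_S z\|^2$ for every $z$; applying this with $z = L_S y$ yields the one-step formula
\[
\|y\|^2 = \|P_S y\|^2 + \|L_S y\|^2 + \|D_S L_S y\|^2,\qquad y\in\mathcal{H}.
\]

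Next I would iterate. Writing $a_k := \|L_S^k x\|^2$ and inserting $y = L_S^k x$ into the one-step formula gives $a_k - a_{k+1} = \|P_S L_S^k x\|^2 + \|D_S L_S^{k+1} x\|^2 \ge 0$, so $(a_k)$ is nonincreasing. Summing from $k=0$ to $N$ telescopes to
\[
\|x\|^2 = \sum_{k=0}^{N}\|P_S L_S^k x\|^2 + \sum_{k=1}^{N+1}\|D_S L_S^k x\|^2 + \|L_S^{N+1} x\|^2 .
\]
In particular both partial sums are bounded by $\|x\|^2$, and to obtain \eqref{norm identity} it suffices to show that the remainder $\|L_S^{N+1}x\|^2$ tends to $0$ as $N\to\infty$.

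The vanishing of the remainder is the crucial point, and it is where analyticity enters. Since $L_S$ annihilates $\ker S^*$ and inverts $S$ on the expansive subspace $S\mathcal{H}$, one checks $\|L_S\|\le 1$, whence from $L_S^N S^N = I$ we get $\|L_S^N x\| = \|L_S^N(S^N L_S^N x)\| \le \|S^N L_S^N x\|$. The idempotent $S^N L_S^N$ has range $S^N\mathcal{H}$, and for a $2$-isometry it is in fact the \emph{orthogonal} projection $\Pi_N$ onto $S^N\mathcal{H}$ (equivalently $L_S^N = (S^{*N}S^N)^{-1}S^{*N}$). Granting this, and using that the closed subspaces $S^N\mathcal{H}$ decrease to $\bigcap_N S^N\mathcal{H} = \mathcal{H}_0 = \{0\}$ because $S$ is analytic, the projections $\Pi_N$ converge strongly to $0$, so that $\|L_S^N x\|^2 \le \|\Pi_N x\|^2 = \langle \Pi_N x, x\rangle \to 0$, as required.

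The step I expect to be the main obstacle is precisely the identification $S^N L_S^N = \Pi_N$, i.e.\ the self-adjointness of this canonical idempotent, which amounts to $\ker S^{*N} = \ker L_S^N$ and hence to the tower $\ker S^{*k}$ being invariant under the defect $D_S^2 = S^*S - I$. This is the genuine structural input, and it is exactly here that both the $2$-isometry hypothesis and analyticity are indispensable; the naive one-step manipulations are self-referential and do not close on their own. I would therefore establish it either by appealing to the concrete model of Theorem \ref{olofsson-analytic}, in which an analytic $2$-isometry is unitarily equivalent to $M_z$ on a Dirichlet-type space (an operator-weighted shift, for which $L_S^N\to 0$ strongly is transparent), or by reproducing the operator-identity argument of \cite{Richter2, Shimorin}.
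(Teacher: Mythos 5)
The paper itself offers no proof of this lemma: it is recalled verbatim from Richter \cite{Richter2}. So your proposal must stand on its own, and its first half does: $SL_S$ is indeed the orthogonal projection onto $S\mathcal{H}$, the one-step identity $\|y\|^2=\|P_Sy\|^2+\|L_Sy\|^2+\|D_SL_Sy\|^2$ is correct, $\|L_S\|\le 1$ holds by expansivity of $S$, and the telescoping identity with remainder $\|L_S^{N+1}x\|^2$ is a correct reduction of \eqref{norm identity} to the statement that $L_S^N\to 0$ strongly.

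The step you yourself flag as the main obstacle, however, is not merely the hard part --- it is false, so the proposed route cannot be completed by any argument. For a general analytic $2$-isometry, $S^NL_S^N$ is an oblique (non-self-adjoint) idempotent with range $S^N\mathcal{H}$: one does not have $\ker L_S^N=\ker S^{*N}$, nor $L_S^N=(S^{*N}S^N)^{-1}S^{*N}$, nor invariance of $\ker S^*$ under $D_S^2$. Concretely, let $S=M_z$ on the scalar local Dirichlet space $\mathcal{D}(\delta_1)$, i.e.\ the one-variable Dirichlet-type space appearing in Theorem \ref{olofsson-analytic} with $\mu=\delta_1$ the unit point mass at $1\in\mathbb{T}$, so that $\hat{\mu}(k)=1$ for every $k$. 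The constants span $\ker S^*$, hence $L_S^2z=L_S1=0$, i.e.\ $z\in\ker L_S^2$; yet
\[
\langle z,z^3\rangle_{\mu}=(1\wedge 3)\,\hat{\mu}(2)=1\neq 0,
\]
so $z\not\perp S^2\mathcal{H}$, i.e.\ $z\notin\ker S^{*2}$. Thus $S^2L_S^2$ is not the orthogonal projection $\Pi_2$ onto $S^2\mathcal{H}$, and your chain $\|L_S^Nx\|\le\|S^NL_S^Nx\|=\|\Pi_Nx\|\to 0$ breaks at the middle equality; worse, for $x\in\ker S^{*2}\setminus\ker L_S^2$ one has $\Pi_2x=0$ but $L_S^2x\neq 0$, so not even the inequality $\|L_S^Nx\|\le\|\Pi_Nx\|$ survives. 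The same example undercuts your proposed fallback: $M_z$ on a Dirichlet-type space is \emph{not} an operator-weighted shift, because the subspaces $z^n\mathcal{E}$ are not pairwise orthogonal (again $z\not\perp z^3$); a wandering subspace $\mathcal{E}$ for a $2$-isometry satisfies $\mathcal{E}\perp S^n\mathcal{E}$, but $S^m\mathcal{E}$ and $S^n\mathcal{E}$ for $m,n\ge 1$ need not be orthogonal, so analyticity of the model gives no ``transparent'' strong convergence of $L_S^N$. What is true --- and is the genuine content of the lemma --- is precisely $\lim_N\|L_S^Nx\|=0$ for analytic $2$-isometries; the arguments of \cite{Richter2}, \cite{Shimorin} and \cite{Olofsson} obtain this through the Cauchy dual operator $L_S^*=S(S^*S)^{-1}$ and duality/wandering-subspace considerations, not through self-adjointness of $S^NL_S^N$. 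As written, your proof is a correct reduction of \eqref{norm identity} to Richter's theorem, with a proposed final step that is irreparable.
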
     
    The map $V$ in the following proposition is motivated by \cite{Olofsson} and serves as an isometry from $\mathcal{H}$ onto $\mathcal{D}^2_{\mathcal{E}}(\Tilde {\mu_1}, \Tilde{\mu_2}).$ This proposition plays a crucial role in this article.
\begin{prop}
    Let $(T_1, T_2)$ be a pair of analytic doubly commuting $2$-isometries on a Hilbert space $\mathcal{H}$ 
    Then the maps $\Tilde{\mu_1}$ and $\Tilde{\mu_2}$, defined by $\Tilde{\mu_i}(\sigma)=PD_iP_{\mathcal{D}_i}E_i(\sigma)D_iP$ for $i=1,2$, are positive $\mathcal{B}(\mathcal{E})$-valued operator measures on $\mathbb{T}$ and the map $V:x \mapsto Vx$ defined by
\begin{equation}\label{mapping V}
     Vx(z_1,z_2) =\sum_{m = 0}^{\infty} \sum_{n = 0}^{\infty} (PL_1^mL_2^n x) z_1^m z_2^n 
   \end{equation} 
   is an isometry from $\mathcal{H}$ onto $\mathcal{D}^2_{\mathcal{E}}(\Tilde{\mu_1},\Tilde{\mu_2})$. 
\end{prop}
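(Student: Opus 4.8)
The plan is to first record the commutation relations forced by double commuting, then prove the isometry property by computing $\|Vx\|_{\tilde\mu_1,\tilde\mu_2}^2$ piece by piece according to \eqref{norm of D-mu}, and finally deduce surjectivity formally. Since $T_1^*T_1$ commutes with $T_2$ and $T_2^*$, each of the variable-one operators $T_1, L_1, P_1, D_1$ commutes with each of the variable-two operators $T_2, L_2, P_2, D_2$; in particular $L_1L_2 = L_2L_1$ and $P = P_1P_2$ is the orthogonal projection onto $\mathcal{E}$. These commutations propagate to the isometries $\tilde{T_i}$, their spectral measures $E_i$, and the projections $P_{\mathcal{D}_i}$, so that the Fourier coefficients satisfy $\hat{\tilde\mu_1}(i)\hat{\tilde\mu_2}(j) = \hat{\tilde\mu_2}(j)\hat{\tilde\mu_1}(i)$ on $\mathcal{E}$; positivity of $\tilde\mu_1,\tilde\mu_2$ then follows by exactly the computation that gives positivity of the single-variable $\mu_i$ in \eqref{explicit - mu-i}, now with $P$ in place of $P_i$. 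I would also record for later use the elementary relations $L_iT_i = I$, $P_iT_i = 0$, and $L_i|_{\mathcal{E}} = 0$, which yield the intertwining $VT_i = M_{z_i}V$ and $Vx = x$ for $x\in\mathcal{E}$.

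The backbone of the norm computation is the one-variable identity obtained by subtracting the $H^2$-part from \eqref{norm identity} together with Theorem \ref{olofsson-analytic}: for an analytic $2$-isometry $T_1$ and all $y,y'\in\mathcal{H}$,
\[
\sum_{m,p \geq 1} (m \wedge p)\langle \hat{\mu_1}(p-m) P_1 L_1^m y,\, P_1 L_1^p y'\rangle = \sum_{k \geq 1}\langle D_1 L_1^k y,\, D_1 L_1^k y'\rangle,
\]
equivalently the operator identity $\sum_{m,p\geq 1}(m\wedge p)(L_1^*)^p P_1\hat{\mu_1}(p-m)P_1 L_1^m = \sum_{k\geq 1}(L_1^*)^k D_1^2 L_1^k$ on $\mathcal{H}$, together with its analogue for $T_2$. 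I would first verify that $\hat{\tilde\mu_i}$ and $\hat{\mu_i}$ agree on $\mathcal{E}$, so these identities apply with $\tilde\mu_i$ whenever the vectors involved lie in $\mathcal{E}$; this is the case, since every coefficient $a_{m,n} = PL_1^mL_2^n x$ lies in $\mathcal{E}$.

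With these in hand I would evaluate the four pieces of $\|Vx\|_{\tilde\mu_1,\tilde\mu_2}^2$. The $H^2(\mathbb{D}^2)$-piece is literally $\sum_{m,n}\|PL_1^mL_2^n x\|^2$. For $D^{(2)}_{\tilde\mu_1,\tilde\mu_2,1}$ I would freeze the second index: writing $a_{m,n} = P_1L_1^m(P_2L_2^n x)$ exhibits $\{a_{m,n}\}_m$ as the one-variable coefficients of $P_2L_2^n x$, so the one-variable identity collapses the $m,p$-sum to $\sum_{k\geq 1}\|D_1P_2L_1^kL_2^n x\|^2$; summing over $n$ and invoking Abel's theorem (all terms are nonnegative) to discard the factor $r^{2n}$ and the limit gives $\sum_{n\geq 0}\sum_{k\geq 1}\|D_1P_2L_1^kL_2^n x\|^2$, and symmetrically for $D^{(2)}_{\tilde\mu_1,\tilde\mu_2,2}$.

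The genuinely two-variable term $D^{(2)}_{\tilde\mu_1,\tilde\mu_2,3}$ is where I expect the main obstacle. Writing it as $\langle\Psi x,x\rangle$ with $\Psi = \sum_{m,n,p,q\geq 1}(m\wedge p)(n\wedge q)(L_2^*)^q(L_1^*)^p P\,\hat{\tilde\mu_2}(q-n)\hat{\tilde\mu_1}(p-m)\,PL_1^mL_2^n$, the plan is to use the commutation relations to separate the variable-one summation from the variable-two summation and factor $\Psi = \Phi_1\Phi_2$, where $\Phi_i$ is the operator from the one-variable identity for $T_i$. The one-variable identities then give $\Phi_1 = \sum_{k\geq 1}(L_1^*)^k D_1^2 L_1^k$ and $\Phi_2 = \sum_{l\geq 1}(L_2^*)^l D_2^2 L_2^l$, and since these commute, $\langle\Psi x,x\rangle = \sum_{k,l\geq 1}\|D_1D_2L_1^kL_2^l x\|^2$. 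The delicate points are the bookkeeping of the projections $P,P_1,P_2$ through the commutations and the justification that the two one-variable reductions may be performed independently — this is precisely where double commuting, as opposed to mere commuting, is essential. Adding the four pieces and comparing with the iterated form of \eqref{norm identity} (apply the one-variable norm identity first to $T_1$, then term by term to $T_2$) gives $\|Vx\|_{\tilde\mu_1,\tilde\mu_2}^2 = \|x\|^2$, so $V$ is an isometry. Surjectivity then follows formally: the isometry $V$ has closed range, and by $VT_i = M_{z_i}V$ and $V|_{\mathcal{E}} = \mathrm{id}$ this range contains every monomial $z_1^mz_2^n e = V(T_1^mT_2^n e)$ with $e\in\mathcal{E}$, hence the dense set of $\mathcal{E}$-valued polynomials, so $V$ maps $\mathcal{H}$ onto $\mathcal{D}^2_{\mathcal{E}}(\tilde\mu_1,\tilde\mu_2)$.
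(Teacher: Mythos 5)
Your proposal is correct, and its skeleton matches the paper's: positivity of $\tilde{\mu}_i$ via compression of $\mu_i$ by the opposite projection, isometry by matching the four summands of $\|Vx\|^2_{\tilde{\mu}_1,\tilde{\mu}_2}$ in \eqref{norm of D-mu} against the four sums of the doubly iterated Richter identity \eqref{two variable norm identity}, and surjectivity from the intertwining plus density of $\mathcal{E}$-valued polynomials. Where you genuinely diverge is in how the Dirichlet terms are computed. The paper expands $x$ in wandering-subspace coordinates $x=\sum_{m,n}T_1^mT_2^nx_{m,n}$ and evaluates $D_1P_2L^{m,n}x$ and $D_1D_2L^{m,n}x$ explicitly through the spectral representation of $\tilde{T}_i$, arriving at the coefficient formulas \eqref{norm identity_2}--\eqref{norm identity_4}; you instead use Theorem \ref{olofsson-analytic} as a black box --- subtracting \eqref{norm identity} from the one-variable Dirichlet norm and polarizing gives a reusable sesquilinear identity that collapses each $m,p$-sum, and the mixed term is absorbed into the factorization $\Psi=\Phi_1\Phi_2$ of commuting positive operators $\Phi_i=\sum_{k\geq 1}(L_i^*)^kD_i^2L_i^k$. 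This buys economy (no spectral integrals are recomputed) and makes the role of double commutation conspicuous, while the paper's route is more self-contained and produces verbatim the coefficient identities it needs. Two points in your plan need care, though neither is a fatal gap since you flagged both. First, ``commutations propagate to the spectral measures $E_i$'' is not literally meaningful: $E_i$ lives on the extension space $\mathcal{K}$, on which the opposite-variable operators do not act. What is true, and all you need, is that $P_{\mathcal{D}_i}U_i^{\pm k}|_{\mathcal{D}_i}$ equals $\tilde{T}_i^{k}$ or $\tilde{T}_i^{*k}$ on $\mathcal{D}_i$, and that $\tilde{T}_i$ and its $\mathcal{D}_i$-adjoint commute with the restrictions to $\mathcal{D}_i$ of the opposite-variable operators (which preserve $\mathcal{D}_i$); this is exactly what makes the extended coefficient operators $P_iD_iP_{\mathcal{D}_i}U_i^{(\cdot)}D_iP_i$ commute across variables and legitimizes $\Psi=\Phi_1\Phi_2$. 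Second, the double sums expressing $\Phi_i$ in Fourier-coefficient form, and hence $\Psi$, are not absolutely convergent in general, so the identities should be proved on the dense set of finite sums $\sum T_1^mT_2^na_{m,n}$ (where all sums are finite) and then extended using that an isometry on a dense subspace extends, and that coefficient functionals on $\mathcal{D}^2_{\mathcal{E}}(\tilde{\mu}_1,\tilde{\mu}_2)$ are continuous --- the same implicit reduction the paper makes.
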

\begin{proof}
    By hypothesis for $i=1,2$, each $T_i$ is analytic $2$-isometry on $\mathcal{H}$, and therefore by applying Theorem \ref{olofsson-analytic}, we get positive $\mathcal{B}(\mathcal{E}_i)$-valued operator measures say $\mu_1,\mu_2$ and unitary maps $V_i$ from $\mathcal{H}$ onto $\mathcal{D}(\mu_i)$ defined by $(V_ix)(z) = \sum_{m \geq 0} (P_iL_i^mx) z^m$ such that $V_iT_i=M_{z_i}V_i$ for $i=1,2$.  
    From \eqref{norm identity}, for each $i=1,2$ and $x \in \mathcal{H},$ we have 
\begin{equation}\label{norm identity2}
    \|x\|^2 = \sum_{k=0}^{\infty} \|P_iL_i^kx\|^2 +  \sum_{k=1}^\infty \|D_iL_i^kx\|^2.
\end{equation} Consider the equation (\ref{norm identity2}) for $i=1$, replace $x$ by $P_2L_2^nx$ and then by taking sum over $n \in \mathbb{Z}_{\geq 0},$ we get 
\begin{align} \label{eqn1-RHS norm x}   \sum_{n=0}^\infty\|P_2L_2^nx\|^2 &= \sum_{m=0}^\infty \sum_{n=0}^\infty \|PL_1^mL_2^nx\|^2 +  \sum_{m=1}^\infty \sum_{n=0}^\infty \|D_1L_1^mP_2L_2^nx\|^2.  
\end{align}    
    Similarly $x$ is replaced with $D_2L_2^nx$ in the equation \eqref{norm identity2} for $i=1$ and then taking sum over $n \in \mathbb{N},$ we get 
\begin{align} \label{eqn2-RHS norm x}   
\sum_{n=1}^\infty\|D_2L_2^nx\|^2 &= \sum_{m=0}^\infty \sum_{n=1}^\infty \|P_1L_1^mD_2L_2^nx\|^2 +  \sum_{m=1}^\infty \sum_{n=1}^\infty \|D_1L_1^mD_2L_2^nx\|^2.  
\end{align}
    Let $D$ denote the operator $D_1D_2$ and $L^{m,n}$ represent the operator $L_1^mL_2^n.$ 
    Using \eqref{eqn1-RHS norm x} and \eqref{eqn2-RHS norm x} in \eqref{norm identity2}, we obtain
\begin{equation}\label{two variable norm identity}
       \|x\|^2 = \sum_{\substack{m=0 \\ n=0}}^\infty  \|PL^{m,n}x\|^2 +  \sum_{\substack{m=1 \\  n=0}}^\infty  \|D_1P_2L^{m,n}x\|^2   +  \sum_{\substack{m=0 \\ n=1}}^\infty  \|P_1D_2L^{m,n}x\|^2 + \sum_{\substack{m=1 \\ n=1}}^\infty \|DL^{m,n}x\|^2.  
\end{equation}
    Moreover, by hypothesis the pair $(T_1,T_2)$ is doubly commuting and hence, by using  \cite[Theorem 4.3]{MRV}, the pair has the wandering subspace property. Therefore, the given Hilbert space can be decomposed as 
    \[
    \mathcal{H}=\bigvee_{m,n \geq 0}T_1^mT_2^n(\mathcal{E}).
    \] 
    Now, with the help of two positive measures $\mu_1$ on $\mathcal{B}(\mathcal{E}_1)$ and $\mu_2$ on $\mathcal{B}(\mathcal{E}_2)$ given in \eqref{explicit - mu-i}, we define two $\mathcal{B}(\mathcal{E})$-valued measures $ \Tilde{\mu_1}, \Tilde{\mu_2}$ on $\mathbb{T}$ as
    \[ 
   \Tilde{\mu_i}(\sigma)=PD_iP_{\mathcal{D}_i}E_i(\sigma)D_iP, \quad \quad i=1,2.
    \] 
    It is easy to deduce that $\Tilde{\mu_1}(\sigma) = P_2 \mu_1(\sigma) P_2$ and $\Tilde{\mu_2}(\sigma) = P_1 \mu_2(\sigma) P_1$. Thus both the measures $ \Tilde{\mu_1}$ and $\Tilde{\mu_2}$ are positive. We consider the Dirichlet-type space $\mathcal{D}^2_{\mathcal{E}}(\Tilde{\mu_1},\Tilde{\mu_2})$ corresponding to the measures $\Tilde{\mu_1}, \Tilde{\mu_2}$ and as well as the map defined in \eqref{mapping V}. 
    Then for any $x$ in $\mathcal{H}$ of the form  $\sum_{m,n=0}^k T_1^{m}T_2^{n}a_{m,n} \in \mathcal{H}$ with $a_{m,n} \in \mathcal{E}$ and for all $(z_1,z_2) \in \mathbb{D}^2$ we have 
    \[
     (Vx)(z_1,z_2) = \sum_{m,n =  0 }^k \sum_{k_1,k_2 = 0}^{\infty}PL_1^{k_1}L_2^{k_2} T_1^m T_2^n a_{m,n} z_1^{k_1} z_2^{k_2} 
    = \sum_{m,n = 0}^k a_{m,n} z_1^m z_2^n.
    \] 
    Note that, $\|Vx\|^2_{H^2(\mathbb{D}^2)}=\sum_{m,n=0}^{\infty} \|PL^{m,n}x\|^2$ for each $x \in \mathcal{H}$. Also, using the doubly commuting property of the pair $(T_1,T_2)$ on $\mathcal{H}$ and representation of isometry $\Tilde{T_1}$, we derive for each $x \in \mathcal{H}$ and $n_1,n_2 \in \mathbb{Z}_{\geq 0},$
    

\begin{align*}
    D_1P_2L^{n_1,n_2}x &= \sum_{m,n = 0}^{\infty}  D_1L_1^{n_1}P_2L_2^{n_2} T_1^m T_2^n x_{m,n} \\
    &= \sum_{m = n_1}^{\infty} D_1 T_1^{m-n_1} x_{m,n_2} \\
    &= \sum_{m = 0}^{\infty} \int_{\mathbb{T}} e^{im\theta_1} dE_1(e^{i\theta_1}) D_1x_{m+n_1, n_2}.   
\end{align*} 
    Now using the above identity we compute the norm 
\begin{align*}
    \|D_1P_2L^{n_1,n_2}x\|^2 
    &= \sum_{j_1,k_1 = 0}^{\infty} \langle P_1D_1P_{\mathcal{D}_1}\int_{\mathbb{T}} e^{i(j_1-k_1)\theta_1} dE_1(e^{i\theta_1})D_1x_{j_1+n_1,n_2}, x_{k_1+n_1,n_2} \rangle \\
    &=  \sum_{j_1,k_1 = n_1}^{\infty} \langle  \widehat{\mu_1}(k_1-j_1) x_{j_1,n_2}, x_{k_1,n_2} \rangle
\end{align*}
Combining all the above identities we derive 
\begin{align}\label{norm identity_2}
    \sum_{n_2 = 0}^{\infty} \sum_{n_1 = 1}^{\infty} \|D_1P_2L^{n_1,n_2}x\|^2 
    &= \sum_{n_2 = 0}^{\infty} \sum_{j_1,k_1 = 1}^{\infty} (j_1 \wedge k_1)  \langle  \widehat{\mu_1}(k_1-j_1) x_{j_1,n_2}, x_{k_1,n_2} \rangle.
\end{align}
Similarly, we can derive
\begin{equation} \label{norm identity_3}
    \sum_{n_1=0}^\infty \sum_{n_2=1}^\infty \|D_2P_1L^{n_1,n_2}x\|^2 =   \sum_{n_1 = 0}^{\infty} \sum_{j_2,k_2 = 1}^{\infty} (j_2 \wedge k_2) \langle \hat{\mu_2}(k_2-j_2)x_{n_1,j_2}, x_{n_1,k_2} \rangle.
\end{equation}
Also, note that following a similar calculation we get 
\begin{align*}
   DL^{n_1,n_2}x &=  \sum_{m,n=0}^{\infty}  D_1L_1^{n_1}D_2L_2^{n_2} T_1^m T_2^n x_{m,n} \\
   &= \sum_{m,n = 0}^{\infty} \bigg( \int_{\mathbb{T}} e^{im\theta_1} dE_1(e^{i\theta_1}) \bigg)D_1 \bigg(\int_{\mathbb{T}} e^{in\theta_2} dE_2(e^{i\theta_2})\bigg) D_2 x_{m+n_1, n+n_2}. 
\end{align*}
And using the above identity we calculate the norm
\begin{align*}
  \| DL^{n_1,n_2}x \|^2  
  &= \sum_{j_1,k_1 = n_1}^{\infty} \sum_{j_2,k_2= n_2}^{\infty} \langle \widehat{\mu_1} (k_1-j_1)  \widehat{\mu_2} (k_2-j_2) x_{j_1,j_2}, x_{k_1,k_2} \rangle.
\end{align*}
Therefore, we have 
\begin{align}\label{norm identity_4}
   \sum_{n_1, n_2 = 1}^{\infty}  \| DL^{n_1,n_2}x \|^2  
   &=  \sum_{j_1,k_1 = 1}^{\infty} \sum_{j_2,k_2 = 1}^{\infty} (j_1 \wedge k_1) ( j_2 \wedge k_2) \langle \widehat{\mu_1} (k_1-j_1)  \widehat{\mu_2} (k_2-j_2) x_{j_1,j_2}, x_{k_1,k_2} \rangle. 
\end{align} 
    Following the description of the term $D^{(2)}_{\Tilde{\mu}_1,\Tilde{\mu}_2,1}(Vx)$ in \eqref{dirichlet integral_1} and comparing it with the equation \eqref{norm identity_2} we deduce for any $r \in (0,1)$
    \begin{align*}
     D^{(2)}_{\Tilde{\mu}_1,\Tilde{\mu}_2,1}(Vx) &= \lim_{r \rightarrow 1^-}  \sum_{n=0}^\infty \sum_{m,p=1}^\infty (m \wedge p)  \langle  \widehat{\mu_1}(p-m) x_{m,n}, x_{p,n} \rangle r^{2n} \\
     &= \lim_{r \rightarrow 1^-}  \sum_{n_2=0}^{\infty} \sum_{n_1 = 1}^{\infty} \|D_1 P_2L^{n_1,n_2}x\|^2 r^{2n_2}
\end{align*}
and note that for each fixed $r \in (0,1)$, 
\begin{align*}
    \sum_{n_1 = 1}^{\infty} \|D_1 P_2L^{n_1,n_2}x\|^2 r^{2n_2} <  \sum_{n_1 = 1}^{\infty} \|D_1 P_2L^{n_1,n_2}x\|^2.
\end{align*}
Thus with the help of Dominated convergence theorem we can interchange the limit and the sum. Therefore it shows that 
\begin{align}\label{1}
     D^{(2)}_{\Tilde{\mu}_1,\Tilde{\mu}_2,1}(Vx) &=   \sum_{n_2 = 0}^{\infty} \sum_{n_1 = 1}^{\infty} \|D_1 P_2L^{n_1,n_2}x\|^2.
\end{align}
Similarly, we obtain the following identities by comparing the equations \eqref{dirichlet integral_2} with \eqref{norm identity_3} and \eqref{dirichlet integral_3} with \eqref{norm identity_4}
\begin{align}
    D^{(2)}_{\Tilde{\mu}_1,\Tilde{\mu}_2,2}(Vx) &=   \sum_{n_1=0}^{\infty} \sum_{n_2 = 1}^{\infty} \|P_1 D_2L^{n_1,n_2}x\|^2~~~ \mbox{and} \label{eq:second_eq1} \\
     D^{(2)}_{\Tilde{\mu}_1,\Tilde{\mu}_2,3}(Vx) &=   \sum_{n_1=1}^{\infty} \sum_{n_2 = 1}^{\infty} \|D L^{n_1,n_2}x\|^2.  \label{eq:second_eq2}   
\end{align}
 The equations  \eqref{1}, \eqref{eq:second_eq1}, \eqref{eq:second_eq2} put together in \eqref{two variable norm identity} establishes the fact that $V$ is an isometry.



    Since, by the Lemma \cite[Lemma 3.14] {MRV}, polynomials are dense in the space $\mathcal{D}^2_{\mathcal{E}}(\Tilde{\mu_1},\Tilde{\mu_2})$, therefore it is enough to show that for any polynomial $p \in \mathcal{D}^2_{\mathcal{E}}(\mu_1,\mu_2)$, with the expression $p(z_1,z_2)=\sum_{m,n = 0}^k a_{m,n} z_1^m z_2^n$, where $a_{m,n} \in \mathcal{E}$ there exists an element $x \in \mathcal{H}$ such that $Vx=p$. By using the hypothesis of this proposition we get for any $(z_1,z_2) \in \mathbb{D}^2$
    \[
    V \Big(\sum_{m,n=0}^k T_1^{m} T_2^{n}a_{m,n} \Big)(z_1,z_2) = \sum_{m,n =  0 }^k \sum_{k_1,k_2 = 0}^{\infty} PL_1^{k_1}L_2^{k_2} T_1^m T_2^n a_{m,n} z_1^{k_1} z_2^{k_2} 
    = \sum_{m,n = 0}^k a_{m,n} z_1^m z_2^n.
    \]
    Therefore, $x = \sum_{m,n=0}^k T_1^{m} T_2^{n}a_{m,n}$ satisfies the equation $Vx=p$ and since the map $V$ is an isometry therefore the value of $x \in \mathcal{H}$ is also unique. This completes the proof.
\end{proof}

The following theorem gives a complete characterization of the class of pair of doubly commuting analytic $2$-isometries acting on a Hilbert space.
\begin{theorem}\label{model thm with analyticity}
   Let $(T_1, T_2)$ be a pair of analytic doubly commuting $2$-isometries on a Hilbert space $\mathcal{H}$. Then $(T_1,T_2)$ on $\mathcal{H}$ is unitarily equivalent with $(M_{z_1},M_{z_2})$ on $\mathcal{D}^2_{\mathcal{E}}(\Tilde{\mu_1},\Tilde{\mu_2}),$ where $\Tilde{\mu_i}(\sigma)=PD_iP_{\mathcal{D}_i}E_i(\sigma)D_iP$ for $i=1,2$, are positive $\mathcal{B}(\mathcal{E})$-valued operator measures. 
\end{theorem}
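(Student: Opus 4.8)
The plan is to leverage the Proposition established immediately before this theorem, which does almost all of the work. That Proposition constructs an explicit map $V : \mathcal{H} \to \mathcal{D}^2_{\mathcal{E}}(\Tilde{\mu_1}, \Tilde{\mu_2})$ given by $Vx(z_1,z_2) = \sum_{m,n \geq 0} (PL_1^m L_2^n x) z_1^m z_2^n$ and proves it is a (surjective) isometry, hence a unitary. So the only thing left to verify for the theorem is the intertwining relations $V T_i = M_{z_i} V$ for $i = 1, 2$, which is precisely what upgrades the unitary $V$ to a unitary equivalence of the pairs $(T_1, T_2)$ and $(M_{z_1}, M_{z_2})$.

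The key steps, in order, are as follows. First I would invoke the Proposition to obtain the unitary $V$ and record that both $\Tilde{\mu_1}, \Tilde{\mu_2}$ are the claimed positive $\mathcal{B}(\mathcal{E})$-valued operator measures. Second, I would compute $V(T_1 x)$ directly from the defining series. The natural approach is to check the intertwining on the dense set of elements of the form $x = \sum_{m,n=0}^{k} T_1^m T_2^n a_{m,n}$ with $a_{m,n} \in \mathcal{E}$, since on such elements the Proposition already shows $Vx(z_1,z_2) = \sum_{m,n=0}^{k} a_{m,n} z_1^m z_2^n$. Applying $T_1$ shifts the indices, giving $V(T_1 x)(z_1,z_2) = \sum_{m,n=0}^{k} a_{m,n} z_1^{m+1} z_2^n = z_1 \cdot Vx(z_1,z_2) = (M_{z_1} Vx)(z_1,z_2)$, and symmetrically for $T_2$ and $M_{z_2}$. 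Third, since both $V T_i$ and $M_{z_i} V$ are bounded and agree on a dense subset, they agree everywhere, establishing $V T_i = M_{z_i} V$ on all of $\mathcal{H}$ for $i = 1,2$. Finally, because $V$ is unitary and intertwines each $T_i$ with the corresponding $M_{z_i}$, the pairs are unitarily equivalent via $V$, which is the assertion of the theorem.

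I do not expect a serious obstacle here, since the Proposition has already absorbed the analytically difficult content, namely the norm computation showing $V$ is isometric and the surjectivity argument using density of polynomials together with the wandering subspace property from \cite[Theorem 4.3]{MRV}. The one point deserving genuine care is the interchange of the operators $L_i$ with the shift action when verifying the intertwining on the dense set: one must confirm that $PL_1^{k_1} L_2^{k_2} T_1^{m+1} T_2^n a_{m,n}$ reduces correctly, which follows from $L_i T_i = I$ together with the doubly commuting relations that allow $L_1, L_2, T_1, T_2$ to be rearranged and from $P$ annihilating any surviving factor of $T_1$ or $T_2$ (as $\mathcal{E} = \ker T_1^* \cap \ker T_2^*$ and $P$ projects onto $\mathcal{E}$). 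This is a short book-keeping computation rather than a conceptual hurdle, and it mirrors the single-variable intertwining $V_i T_i = M_{z_i} V_i$ already quoted from \cite{Olofsson} inside the proof of the Proposition.
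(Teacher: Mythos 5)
Your proposal is correct and follows essentially the same route as the paper: both reduce the theorem to the preceding Proposition (which supplies the unitary $V$) and then verify the intertwining relations $VT_i = M_{z_i}V$. The only cosmetic difference is that the paper checks the intertwining directly on an arbitrary $x \in \mathcal{H}$ via the series identity $PL_1^mL_2^nT_1x = PL_1^{m-1}L_2^nx$ (with the $m=0$ terms vanishing since $PT_1=0$), whereas you check it on the dense span of $\{T_1^mT_2^n a : a \in \mathcal{E}\}$ and extend by boundedness; both verifications are valid and rest on the same operator identities.
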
 
\begin{proof}
    In the previous proposition, we proved that the map $V: \mathcal{H} \to \mathcal{D}^2_{\mathcal{E}}(\Tilde{\mu_1},\Tilde{\mu_2}),$ defined above, is a unitary. Therefore it is enough to prove that   
    $VT_i = M_{z_i}V,$ for $i=1,2.$ 
    For $z_1,z_2 \in \mathbb{D}$, we obtain
\begin{align*}
    (VT_1x)(z_1,z_2) &= \sum_{m = 0}^{\infty} \sum_{n = 0}^{\infty} (PL_1^mL_2^nT_1x) z_1^m z_2^n 
    = \sum_{m = 1}^{\infty} \sum_{n = 0}^{\infty} (PL_1^{m-1}L_2^nx) z_1^m z_2^n \\
    &= z_1 \sum_{m = 1}^{\infty} \sum_{n = 0}^{\infty} (PL_1^{m-1}L_2^nx) z_1^{m-1} z_2^n \\
    &= M_{z_1}(Vx)(z_1,z_2).
\end{align*}
    Similarly, it can be verified that $VT_2=M_{z_2}V.$
    This guarantees that $(T_1,T_2)$ on $\mathcal{H}$ is unitarily equivalent with $(M_{z_1},M_{z_2})$ on $\mathcal{D}^2_{\mathcal{E}}(\Tilde{\mu_1},\Tilde{\mu_2}).$
\end{proof}
\begin{rem}\label{uniqueness of measure upto a unitary}
    As is shown in \cite[Remark 5.2]{MRV}, it can be shown that if the pair $(M_{z_1},M_{z_2})$ on $\mathcal{D}^2_{\mathcal{E_{\eta}}}(\eta_1,\eta_2)$ 
    is unitarily equivalent to the pair $(M_{z_1},M_{z_2})$ 
    on $\mathcal{D}^2_{\mathcal{E_\beta}}(\beta_1,\beta_2)$ 
    for some finite positive operator-valued Borel measures 
    $\eta_1,\eta_2,\beta_1$ and $\beta_2$ on $\mathbb T$ 
    with $\mathcal{E_\eta}$ and $\mathcal{E_\beta}$ are joint kernel of $(M_{z_1}^*, M_{z_2}^*)$ on $\mathcal{D}^2_{\mathcal{E_{\eta}}}(\eta_1,\eta_2)$ and $\mathcal{D}^2_{\mathcal{E_\beta}}(\beta_1,\beta_2)$ respectively, then there exists a unitary operator $U:\mathcal E_{\eta
    } \to \mathcal E_{\beta}$ such that $\eta_i(\sigma)=U^*\beta_i(\sigma) U$ for all Borel subsets $\sigma$ of unit circle $\mathbb T.$ 
\end{rem}
\section{von Neumann Wold type decomposition for General pair}
    The main purpose of this section is to develop a von Neumann Wold-type decomposition for the class of doubly commuting $2$-isometries $(T_1,T_2)$ acting on a Hilbert space $\mathcal{H}$. The key idea is to use the Wold-type decomposition for a single $2$-isometry which we recall first for the convenience of the reader.  
    Given a $2$-isometry $T$ on a  Hilbert space $\mathcal{H}$, in \cite{Shimorin, Olofsson_2} the authors show that the operator $T$ can be decomposed in a direct sum of unitary operator and an analytic $2$-isometry.
  More precisely,  
    \begin{theorem}\label{2-isometry thm}
    Let $T \in \mathcal{B}(\mathcal{H})$ be a 2-isometry. Then the space $\mathcal{H}$ admits an unique decomposition of the form $ \mathcal{H}=\mathcal{H}_0 \oplus \mathcal{H}_1$, where the subspaces $\mathcal{H}_0$ and $\mathcal{H}_1$ are $T$-reducing subspaces of $\mathcal{H}$ and $T|_{\mathcal{H}_0}$ is unitary. 
    Moreover, 
    \[ 
    \mathcal{H}_{0} = \bigcap_{n \geq 0}T^n(\mathcal{H}) \quad \text{and} \quad \mathcal{H}_1 = \bigvee_{n \geq 0} T^n(\mathcal{E}_T)
    \] where $\mathcal{E}_T=\mathcal{H}\ominus T(\mathcal{H})$ is the wandering subspace for $T$.  
    \end{theorem}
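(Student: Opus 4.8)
The plan is to construct the decomposition from the explicit candidate pieces $\mathcal{H}_0 := \bigcap_{n\geq 0} T^n(\mathcal{H})$ and $\mathcal{H}_1 := \mathcal{H}\ominus\mathcal{H}_0$, to show that $T$ restricts to a unitary on $\mathcal{H}_0$ and to an \emph{analytic} $2$-isometry on $\mathcal{H}_1$, and then to invoke Theorem \ref{analytic+2-isometry} on the analytic part to obtain the wandering subspace description. The starting point is the recalled fact (\cite[Proposition 1.1]{Olofsson}) that $\mathcal{H}_0$ is $T$-reducing with $T|_{\mathcal{H}_0}$ unitary; consequently $\mathcal{H}_1$ is $T$-reducing as well, and since the defining condition of a $2$-isometry (equality in \eqref{concave}) is a vectorwise one, $T|_{\mathcal{H}_1}$ is again a $2$-isometry, in particular concave.

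The first substantive step is to verify that $T|_{\mathcal{H}_1}$ is analytic, that is, $\bigcap_{n\geq 0}(T|_{\mathcal{H}_1})^n(\mathcal{H}_1) = \{0\}$. Because the splitting $\mathcal{H} = \mathcal{H}_0\oplus\mathcal{H}_1$ reduces $T$, one has $T^n(\mathcal{H}) = T^n(\mathcal{H}_0)\oplus T^n(\mathcal{H}_1)$, and since $T|_{\mathcal{H}_0}$ is unitary, $T^n(\mathcal{H}_0) = \mathcal{H}_0$ for every $n$. Hence $T^n(\mathcal{H}) = \mathcal{H}_0\oplus T^n(\mathcal{H}_1)$, and intersecting over $n$ while using orthogonality yields $\mathcal{H}_0 = \bigcap_n T^n(\mathcal{H}) = \mathcal{H}_0\oplus\bigcap_n (T|_{\mathcal{H}_1})^n(\mathcal{H}_1)$, which forces $\bigcap_n (T|_{\mathcal{H}_1})^n(\mathcal{H}_1) = \{0\}$. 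Thus $T|_{\mathcal{H}_1}$ is an analytic concave operator, and Theorem \ref{analytic+2-isometry} shows that $\mathcal{H}_1\ominus T(\mathcal{H}_1)$ is a wandering subspace for $T|_{\mathcal{H}_1}$ with $\mathcal{H}_1 = \bigvee_{n\geq 0}(T|_{\mathcal{H}_1})^n\big(\mathcal{H}_1\ominus T(\mathcal{H}_1)\big)$.

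It then remains to identify this wandering subspace with $\mathcal{E}_T = \mathcal{H}\ominus T(\mathcal{H})$. Using $T(\mathcal{H}_0) = \mathcal{H}_0$ once more, one computes $T(\mathcal{H}) = \mathcal{H}_0\oplus T(\mathcal{H}_1)$, whence $\mathcal{E}_T = \mathcal{H}\ominus T(\mathcal{H}) = \mathcal{H}_1\ominus T(\mathcal{H}_1)$; substituting into the previous display gives $\mathcal{H}_1 = \bigvee_{n\geq 0}T^n(\mathcal{E}_T)$, the asserted formula, and confirms that $\mathcal{E}_T$ is wandering for $T$.

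Finally, for uniqueness I would argue that $\mathcal{H}_0$ is forced as the maximal $T$-reducing subspace on which $T$ acts unitarily. Indeed, if $\mathcal{K}_0$ is any $T$-reducing subspace with $T|_{\mathcal{K}_0}$ unitary, then $T^n(\mathcal{K}_0) = \mathcal{K}_0 \subseteq T^n(\mathcal{H})$ for all $n$, so $\mathcal{K}_0\subseteq\bigcap_n T^n(\mathcal{H}) = \mathcal{H}_0$; since $\mathcal{H}_0$ is itself such a subspace it is the largest one, and the complementary piece $\mathcal{H}_1$ carries no nonzero unitary reducing summand (any such would lie in $\mathcal{H}_0\cap\mathcal{H}_1 = \{0\}$), so the decomposition into unitary and analytic parts is determined. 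I expect the main difficulty to be not any single computation but keeping the bookkeeping of the reducing splitting consistent—specifically the interchange of the intersection with the orthogonal direct sum used to conclude analyticity of $T|_{\mathcal{H}_1}$, and verifying that the wandering subspace delivered by Theorem \ref{analytic+2-isometry} coincides with $\mathcal{E}_T$ exactly rather than merely being contained in it.
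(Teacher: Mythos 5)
Your proposal is correct, but there is nothing in the paper to compare it against: the paper does not prove Theorem \ref{2-isometry thm}; it quotes it as a known result from \cite{Shimorin, Olofsson_2}. Your argument is a valid self-contained reconstruction, assembled precisely from the two facts the paper does recall elsewhere: that $\mathcal{H}_0=\bigcap_{n\geq0}T^n\mathcal{H}$ is $T$-reducing with $T|_{\mathcal{H}_0}$ unitary (\cite[Proposition 1.1]{Olofsson}), and Richter's Theorem \ref{analytic+2-isometry} applied to $T|_{\mathcal{H}_1}$. The two steps you flagged as delicate both hold: the interchange $\bigcap_{n}\big(\mathcal{H}_0\oplus T^n\mathcal{H}_1\big)=\mathcal{H}_0\oplus\bigcap_n T^n\mathcal{H}_1$ follows because the $\mathcal{H}_0$- and $\mathcal{H}_1$-components of any vector on the left side are uniquely determined by $\mathcal{H}=\mathcal{H}_0\oplus\mathcal{H}_1$, which forces the $\mathcal{H}_1$-component to lie in every $T^n\mathcal{H}_1$; and $\mathcal{E}_T=\mathcal{H}_1\ominus T\mathcal{H}_1$ exactly (not merely containment), since $T\mathcal{H}=T\mathcal{H}_0\oplus T\mathcal{H}_1=\mathcal{H}_0\oplus T\mathcal{H}_1$. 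Only one point deserves tightening. The uniqueness claim, read literally (only ``$T|_{\mathcal{H}_0}$ unitary''), cannot be what is meant, since $\mathcal{K}_0=\{0\}$, $\mathcal{K}_1=\mathcal{H}$ always qualifies; it must be read among decompositions whose second summand is in addition analytic (equivalently, generated by $\mathcal{E}_T$). Your maximality argument gives $\mathcal{K}_0\subseteq\mathcal{H}_0$ for any competing decomposition $\mathcal{H}=\mathcal{K}_0\oplus\mathcal{K}_1$ of this kind, but to conclude $\mathcal{K}_0=\mathcal{H}_0$ you should apply your ``no unitary reducing summand'' observation to $\mathcal{K}_1$ rather than to $\mathcal{H}_1$: the difference $\mathcal{H}_0\ominus\mathcal{K}_0$ is a $T$-reducing subspace of $\mathcal{K}_1$ on which $T$ is unitary, hence it lies in $\bigcap_n T^n\mathcal{K}_1=\{0\}$ by analyticity of $T|_{\mathcal{K}_1}$. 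This is a one-line repair, not a gap in substance.
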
    
Before proceeding further, we note down some important properties for the pair of doubly commuting $2$-isometry operators. We skip the proof as these are straightforward properties to verify.
\begin{lemma}\label{reducing}
   Let $(T_1,T_2)$ be a pair of doubly commuting $2$-isometries on $\mathcal{H}$. Then for $i,j \in \{1,2\}$ with $i \neq j$, 
   \begin{enumerate}
       \item[(a)] $kerT_i^*$ is $T_j$-reducing subspace of $\mathcal{H}$.
       \item[(b)] For each $m \geq 0$, $T_i^m\bigg ( \bigcap_{n \geq 0} T_j^{n}\mathcal{E}_i\bigg) = \bigg( T_i^m (\mathcal{E}_i)\bigg) \cap \bigg(\bigcap_{n \geq 0} T_j^{n} \mathcal{H} \bigg)$. 
   \end{enumerate} 
\end{lemma}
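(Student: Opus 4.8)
The plan is to establish Lemma~\ref{reducing} by exploiting the doubly commuting relations directly, treating part (a) first and then using it together with the single-variable Wold decomposition (Theorem~\ref{2-isometry thm}) to obtain part (b).

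For part (a), I would fix $i \neq j$ and show that $\ker T_i^*$ is invariant under both $T_j$ and $T_j^*$. Take $x \in \ker T_i^*$, so $T_i^* x = 0$. The double commutation $T_i^* T_j = T_j T_i^*$ immediately gives $T_i^*(T_j x) = T_j T_i^* x = 0$, so $T_j x \in \ker T_i^*$; hence $\ker T_i^*$ is $T_j$-invariant. For $T_j^*$-invariance I would use the other doubly commuting identity. Since the pair is doubly commuting, we have $T_i^* T_j^* = T_j^* T_i^*$ (taking adjoints of the commutation relation $T_j T_i = T_i T_j$), so $T_i^*(T_j^* x) = T_j^* T_i^* x = 0$, giving $T_j^* x \in \ker T_i^*$ as well. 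Being invariant under both $T_j$ and $T_j^*$, the subspace $\ker T_i^*$ is $T_j$-reducing, which is exactly the claim, since $\mathcal{E}_i = \ker T_i^*$.

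For part (b), the inclusion ``$\subseteq$'' is the more transparent direction. Any element of the left-hand side has the form $T_i^m y$ with $y \in \bigcap_{n \ge 0} T_j^n \mathcal{E}_i$. Clearly $T_i^m y \in T_i^m(\mathcal{E}_i)$. To see that $T_i^m y \in \bigcap_{n\ge 0} T_j^n \mathcal{H}$, I would write $y = T_j^n w_n$ for each $n$ with $w_n \in \mathcal{E}_i \subseteq \mathcal{H}$, and then use commutativity $T_i^m T_j^n = T_j^n T_i^m$ to move $T_i^m$ inside: $T_i^m y = T_j^n (T_i^m w_n) \in T_j^n \mathcal{H}$ for every $n$. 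For the reverse inclusion ``$\supseteq$'', suppose $u \in \big(T_i^m(\mathcal{E}_i)\big) \cap \big(\bigcap_n T_j^n \mathcal{H}\big)$. Write $u = T_i^m v$ with $v \in \mathcal{E}_i$; since $T_i$ is a $2$-isometry it is left-invertible (bounded below), so $v = L_i^m u$ is uniquely determined. The task is to show $v \in \bigcap_n T_j^n \mathcal{E}_i$. Because $u \in T_j^n \mathcal{H}$, I would recover $v$ by applying the left inverse $L_i^m$ and pushing the $T_j^n$ factor through, using the doubly commuting relation between $L_i = (T_i^* T_i)^{-1} T_i^*$ and $T_j$; here the fact from part (a) that $\mathcal{E}_i$ is $T_j$-reducing is what guarantees that the preimage $v$ actually lands back in $\mathcal{E}_i$ rather than merely in $\mathcal{H}$.

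I expect the main obstacle to be the reverse inclusion in part (b), specifically verifying that $L_i$ commutes appropriately with $T_j$ and $T_j^*$ so that the $T_j^n$-factorization of $u$ descends to a $T_j^n$-factorization of $v$ inside $\mathcal{E}_i$. The clean way to handle this is to check that $L_i T_j = T_j L_i$ and $L_i T_j^* = T_j^* L_i$ on the relevant subspace: from $T_i^* T_j = T_j T_i^*$ and the invertibility and commutation properties of $T_i^* T_i$ with $T_j$ (which follows since $\ker T_i^*$, equivalently the structure of $T_i^*T_i$, interacts well with $T_j$ by part (a)), one deduces that $L_i^m$ intertwines $T_j^n$. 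Granting these intertwining relations, if $u = T_j^n u'$ then $v = L_i^m u = L_i^m T_j^n u' = T_j^n L_i^m u' = T_j^n (L_i^m u')$, and since $\mathcal{E}_i$ is $T_j$-reducing and $v \in \mathcal{E}_i$, the factor $L_i^m u'$ can be taken in $\mathcal{E}_i$, placing $v$ in $T_j^n \mathcal{E}_i$ for every $n$. As the statement itself advertises these as ``straightforward properties,'' I would present the commutation of $L_i$ with $T_j, T_j^*$ as the single computational lemma underlying both directions and keep the bookkeeping brief.
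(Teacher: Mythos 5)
The paper itself gives no proof of this lemma (it is explicitly skipped as ``straightforward properties to verify''), so there is no argument to compare against line by line; your proposal is correct and supplies exactly the direct verification the paper leaves to the reader. Part (a) is complete as written, and both inclusions in part (b) go through: the forward one needs only commutativity of $T_1,T_2$ (plus the $n=0$ term of the intersection to see $y\in\mathcal{E}_i$), and the reverse one works once $L_iT_j=T_jL_i$ is in hand. The one spot that needs repair is your justification of that intertwining: the commutation of $T_i^*T_i$ with $T_j$ does not come from part (a), as your parenthetical suggests; it is the one-line consequence of the two doubly commuting relations, $T_i^*T_iT_j=T_i^*T_jT_i=T_jT_i^*T_i$, from which $(T_i^*T_i)^{-1}T_j=T_j(T_i^*T_i)^{-1}$ (invertibility holds since a $2$-isometry is expansive, $T_i^*T_i\geq I$) and hence $L_iT_j=(T_i^*T_i)^{-1}T_i^*T_j=(T_i^*T_i)^{-1}T_jT_i^*=T_jL_i$. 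With that fixed, your closing step is sound and can be stated cleanly without any appeal to injectivity of $T_j$: write $L_i^mu'=w_1+w_2$ with $w_1\in\mathcal{E}_i$ and $w_2\in\mathcal{E}_i^{\perp}$; by part (a) both subspaces are invariant under $T_j^n$, so $v=T_j^nw_1+T_j^nw_2$ with $v\in\mathcal{E}_i$ forces $T_j^nw_2=0$, giving $v=T_j^nw_1\in T_j^n\mathcal{E}_i$ for every $n$, as required.
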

         In the following theorem, we get an analogue of Theorem \ref{2-isometry thm}. 
         More precisely, we present the decomposition of the Hilbert space $\mathcal{H}$ corresponding to a pair of doubly commuting $2$-isometries acting on a Hilbert space $\mathcal{H}.$ The presence of analytic structure in this result is certainly giving an improved version of Theorem 5.1 in \cite{NZ} in this set-up. 
    

\begin{theorem}\label{wold-type decomposition}
     Let $(T_1,T_2)$ be a pair of doubly commuting $2$-isometries on a Hilbert space $\mathcal{H}$. Then there exist positive Borel measures $\nu_1,\nu_2,\eta_1$ and $\eta_2$ on $\mathbb{T}$ such that 
     \[
     \mathcal{H} \cong \mathcal{H}_{00} \oplus \mathcal{D}_{\mathcal{E}_{10}}(\nu_1) \oplus \mathcal{D}_{\mathcal{E}_{01}}(\nu_2) \oplus \mathcal{D}^2_{\mathcal{E}}(\eta_1,\eta_2)
     \] where $\mathcal{E}_{10} = \cap_{n \geq 0} T_2^n \, kerT_1^*$, $\mathcal{E}_{01} = \cap_{m \geq 0} T_1^m \, kerT_2^*$ and $\mathcal{E} = kerT_1^* \cap kerT_2^*$.  Moreover, 
                  \[ T_1 \cong \begin{bmatrix}
U_0 & 0 & 0 & 0\\
0 & M_z & 0 & 0 \\
0 & 0 & U_1 & 0 \\
0 & 0 &  0  &  M_{z_1}
\end{bmatrix}    \quad \text{and} \quad   T_2 \cong \begin{bmatrix}
V_0 & 0 & 0 & 0\\
0 & V_1 & 0 & 0 \\
0 & 0 & M_z & 0 \\
0 & 0 &  0  &  M_{z_2}
\end{bmatrix}
\]  where $U_0$ and     $V_0$ are unitary   on $\mathcal{H}_{00}    = \cap_{m,n \geq 0} T_1^mT_2^n \mathcal{H}$, $U_1$ and $V_1$ are unitary on $\mathcal{D}_{\mathcal{E}_{01}}(\nu_2)$ and  $\mathcal{D}_{\mathcal{E}_{10}}(\nu_1)$, respectively. 
    \end{theorem}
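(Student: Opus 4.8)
The plan is to produce the four-fold splitting by applying the single-operator Wold-type decomposition of Theorem \ref{2-isometry thm} twice, first along $T_1$ and then along $T_2$, in the spirit of Slocinski's argument for pairs of isometries, and afterwards to recognize each of the four resulting summands through the models already at hand (Theorem \ref{model thm with analyticity} for the doubly analytic piece and Theorem \ref{olofsson-analytic} for the mixed pieces).

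First I would apply Theorem \ref{2-isometry thm} to $T_1$ to write $\mathcal{H}=\mathcal{H}^{(1)}_u\oplus\mathcal{H}^{(1)}_a$, where $\mathcal{H}^{(1)}_u=\bigcap_{n\geq 0}T_1^n\mathcal{H}$ carries a unitary $T_1$ and $\mathcal{H}^{(1)}_a=\bigvee_{n\geq 0}T_1^n(\ker T_1^*)$ carries an analytic $T_1$. The doubly commuting relations $T_2T_1=T_1T_2$ and $T_2^*T_1=T_1T_2^*$ give $T_2\,T_1^n\mathcal{H}\subseteq T_1^n\mathcal{H}$ and $T_2^*\,T_1^n\mathcal{H}\subseteq T_1^n\mathcal{H}$, so $\mathcal{H}^{(1)}_u$, and hence its orthogonal complement $\mathcal{H}^{(1)}_a$, is $T_2$-reducing. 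Thus $T_2$ restricts to a $2$-isometry on each summand and the restricted pairs remain doubly commuting. Applying Theorem \ref{2-isometry thm} to $T_2$ on each of $\mathcal{H}^{(1)}_u$ and $\mathcal{H}^{(1)}_a$ splits them further into four jointly reducing subspaces $\mathcal{H}_{00},\mathcal{H}_{10},\mathcal{H}_{01},\mathcal{H}_{11}$, on which $(T_1,T_2)$ is respectively (unitary, unitary), (shift, unitary), (unitary, shift) and (shift, shift); a quick check that the restriction of an analytic (resp. unitary) operator to a reducing subspace stays analytic (resp. unitary) pins down the four types.

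Next I would identify the models on each piece. On $\mathcal{H}_{11}$ the pair is a doubly commuting pair of analytic $2$-isometries, so Theorem \ref{model thm with analyticity} yields the unitary equivalence with $(M_{z_1},M_{z_2})$ on $\mathcal{D}^2_{\mathcal{E}'}(\eta_1,\eta_2)$, where $\mathcal{E}'=\ker(T_1|_{\mathcal{H}_{11}})^*\cap\ker(T_2|_{\mathcal{H}_{11}})^*$. On the mixed pieces exactly one operator is analytic; on $\mathcal{H}_{10}$ I would feed the analytic $T_1$ into the single-variable model of Theorem \ref{olofsson-analytic} to realize it as $M_z$ on a Dirichlet-type space $\mathcal{D}_{\mathcal{W}}(\nu_1)$ over its wandering subspace $\mathcal{W}=\mathcal{H}_{10}\ominus T_1\mathcal{H}_{10}$, and then transport the remaining doubly commuting unitary $T_2$ through this unitary equivalence to obtain the unitary $V_1$ commuting with $M_z$; the piece $\mathcal{H}_{01}$ is handled symmetrically, producing $M_z$ and the unitary $U_1$, while on $\mathcal{H}_{00}$ nothing remains but to name the two unitaries $U_0,V_0$. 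Assembling these four equivalences gives the block-diagonal forms of $T_1$ and $T_2$.

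The crux, and the step I expect to be the main obstacle, is to match these abstractly produced wandering and kernel subspaces with the explicit ones in the statement, namely $\mathcal{E}_{10}=\bigcap_n T_2^n\ker T_1^*$, $\mathcal{E}_{01}=\bigcap_m T_1^m\ker T_2^*$, $\mathcal{E}=\ker T_1^*\cap\ker T_2^*$ and $\mathcal{H}_{00}=\bigcap_{m,n}T_1^mT_2^n\mathcal{H}$. For $\mathcal{H}_{10}$ this means showing $\mathcal{H}_{10}\ominus T_1\mathcal{H}_{10}=\bigcap_n T_2^n\ker T_1^*$ together with $\mathcal{H}_{10}=\bigvee_m T_1^m\big(\bigcap_n T_2^n\ker T_1^*\big)$, and here Lemma \ref{reducing} is the decisive tool: part (a) makes $\ker T_1^*$ a $T_2$-reducing space on which $T_2$ is again a $2$-isometry, so $\bigcap_n T_2^n\ker T_1^*$ is exactly its unitary part and $T_2$ is unitary on $\mathcal{E}_{10}$, while part (b) is precisely the identity needed to commute the intersection past the powers of $T_1$ and recover the wandering subspace property with wandering subspace $\mathcal{E}_{10}$. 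An analogous computation handles $\mathcal{E}_{01}$, and combining the two descriptions identifies $\mathcal{E}=\ker T_1^*\cap\ker T_2^*$ as the joint kernel governing $\mathcal{H}_{11}$ and $\mathcal{H}_{00}$ as the doubly unitary part. Since all four subspaces are defined canonically by intersections and spans, the decomposition is unique, and the measures are unique up to a unitary by Remark \ref{uniqueness of measure upto a unitary}.
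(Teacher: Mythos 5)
Your proposal is correct and follows essentially the same route as the paper: a double application of the single-variable Wold-type decomposition (Theorem \ref{2-isometry thm}), double commutativity and Lemma \ref{reducing} to make the four pieces jointly reducing and to identify the wandering subspaces $\mathcal{E}_{10}$, $\mathcal{E}_{01}$, $\mathcal{E}$, then Theorem \ref{olofsson-analytic} on the mixed pieces and Theorem \ref{model thm with analyticity} on the doubly analytic piece. Your two small refinements — deducing analyticity of the restrictions directly from the fact that a reducing restriction of an analytic operator is analytic, and explicitly transporting the leftover unitary through the model equivalence to obtain $U_1$ and $V_1$ — are sound and, if anything, make explicit points the paper leaves implicit.
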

    
\begin{proof}
    By hypothesis, $T_1$ is a $2$-isometry on $\mathcal{H}$. Then, by applying Theorem \ref{2-isometry thm}, the Hilbert space $\mathcal{H}$ can be decomposed into two $T_1$-reducing subspace, that is 
\begin{align*}
     \mathcal{H} &= \mathcal{H}_0 \oplus \mathcal{H}_1 
\end{align*}
    where $\mathcal{H}_0 =\bigcap_{m \geq 0}T_1^m(\mathcal{H})$ and $\mathcal{H}_1 = \bigvee_{m \geq 0}T_1^m(\mathcal{E}_1)$ with $\mathcal{E}_1 = kerT_1^*$. Since the pair $(T_1,T_2)$ on $\mathcal{H}$ is doubly commutative, so by the Lemma \ref{reducing}, both the subspaces $\mathcal{H}_0$ and $\mathcal{H}_1$ are $T_2$-reducing. Thus application of the Theorem \ref{2-isometry thm} for the operator $T_2|_{\mathcal{H}_0}$ on $\mathcal{H}_0$ yields the decomposition of $\mathcal{H}_0$ into two $T_2$-reducing subspaces. That is, 
\begin{align*}
   \mathcal{H}_0 &= \bigcap_{n \geq 0}T_2^n(\mathcal{H}_0) \oplus \bigvee_{n \geq 0} T_2^n(\mathcal{H}_0 \ominus T_2\mathcal{H}_0) \nonumber \\
   &= \bigcap_{m,n \geq 0} T_1^m T_2^n \mathcal{H} \oplus \bigvee_{n \geq 0} T_2^n \bigg(\bigcap_{m \geq 0}T_1^m(\mathcal{H} \ominus T_2\mathcal{H})\bigg),
\end{align*}
    where the last equality follows by using the doubly commutativity property of the pair $(T_1,T_2)$. We denote $\mathcal{H}_{00} = \bigcap_{m,n \geq 0} T_1^m T_2^n \mathcal{H}$ and $\mathcal{H}_{01}= \bigvee_{n \geq 0} T_2^n \big(\bigcap_{m \geq 0}T_1^m\mathcal{E}_2  \big)$. Note that, for $i=1,2$, following Theorem \ref{2-isometry thm}, the subspace $\cap_{m \geq 0}T_i^m \mathcal{H}$ is $T_i$-reducing and $T_i|_{\cap_{m \geq 0}T_i^m \mathcal{H}}$ is unitary. Since $\mathcal{H}_{00} \subseteq \cap_{m \geq 0}T_i^m \mathcal{H}$ is a $T_i$-reducing subspace for any $i=1,2$, hence $T_1|_{\mathcal{H}_{00}}$ and $T_2|_{\mathcal{H}_{00}}$ both are unitary on $\mathcal{H}_{00}$. Also, by Lemma \ref{reducing}, we know that the space 
    \[ 
    \mathcal{H}_{01}= \big(\bigvee_{n \geq 0} T_2^n \mathcal{E}_2 \big) \cap \big(\bigcap_{m \geq 0}T_1^m \mathcal{H} \big). 
    \] That is, $\mathcal{H}_{01}$ is the intersection of two joint $(T_1,T_2)$-reducing subspace of $\mathcal{H}$, therefore it is also a joint $(T_1,T_2)$-reducing subspace. Along with that, since $\mathcal{H}_{01} \subseteq \bigcap_{m \geq 0}T_1^m \mathcal{H} = \mathcal{H}_0$ and $T_1|_{\mathcal{H}_0}$ is a unitary, so $T_1|_{\mathcal{H}_{01}}$ is also an unitary. 
    

     Now we apply Theorem \ref{2-isometry thm} for the operator $T_2|_{\mathcal{H}_1}$ on $\mathcal{H}_1$. That yields the decomposition of $\mathcal{H}_1$ into two $T_2$-reducing subspaces. That is, by using the hypothesis on the pair we have
\begin{align*}
     \mathcal{H}_1 &= \bigcap_{n \geq 0} T_2^n \big(\mathcal{H}_1\big) \oplus \bigvee_{n \geq 0} T_2^n( \mathcal{H}_1 \ominus T_2\mathcal{H}_1) \\
     &=  \bigvee_{m \geq 0} T_1^m\bigg ( \bigcap_{n \geq 0} T_2^{n}\mathcal{E}_1\bigg) \nonumber \oplus \bigvee_{m,n \geq 0} T_1^m T_2^n( \mathcal{E}_1 \cap  \mathcal{E}_2).
\end{align*}
    We denote $\mathcal{H}_{10} = \bigvee_{m \geq 0} T_1^m \big( \bigcap_{n \geq 0} T_2^{n}\mathcal{E}_1 \big)$ and $\mathcal{H}_{11} = \bigvee_{m,n \geq 0} T_1^m T_2^n( \mathcal{E}_1 \cap  \mathcal{E}_2)$. Again by using the Lemma \ref{reducing} we have $\mathcal{H}_{10} = \big(\bigvee_{m \geq 0} T_1^m \mathcal{E}_1 \big) \cap \big(\bigcap_{n \geq 0} T_2^{n} \mathcal{H} \big)$. Being intersection of two joint $(T_1,T_2)$-reducing subspace, the space $\mathcal{H}_{10}$ is also a joint $(T_1,T_2)$-reducing subspace of $\mathcal{H}$. Since $\mathcal{H}_{10} \subseteq \bigcap_{n \geq 0} T_2^{n} \mathcal{H}$, so $T_2|_{\mathcal{H}_{10}}$ is an unitary. 
    Since $\mathcal{H}_{11}= \mathcal{H}_1 \ominus \mathcal{H}_{10}$, so $\mathcal{H}_{11}$ is also a joint $(T_1,T_2)$-reducing subspace. Moreover, the pair $(T_1|_{\mathcal{H}_{11}}, T_2|_{\mathcal{H}_{11}})$ is a doubly commuting pair of analytic $2$-isometries on $\mathcal{H}_{11}$. 
    Combining all the above decompositions we have 
    \[
    \mathcal{H} = \mathcal{H}_{00}  \oplus \mathcal{H}_{10}  \oplus \mathcal{H}_{01}  \oplus \mathcal{H}_{11}   
    \] and for $\alpha_1, \alpha_2 \in \{0,1\}$, $T_i$ on $\mathcal{H}_{\alpha_1,\alpha_2}$ is unitary if $\alpha_i=0$ for each $i=1,2.$ 
    From the above decomposition of the Hilbert space $\mathcal{H}$, we know $T_1|_{\mathcal{H}_{10}}$ is also a $2$-isometry on $\mathcal{H}_{10}$. And, by Lemma \ref{reducing}(b), we note that 
    \[
    T_1^m\bigg ( \bigcap_{n \geq 0} T_2^{n}\mathcal{E}_1\bigg) = \bigg( T_1^m (\mathcal{E}_1)\bigg) \cap \bigg(\bigcap_{n \geq 0} T_2^{n} \mathcal{H} \bigg) 
    \] for each $m \geq 0$. Now, using the fact that $\mathcal{E}_1$ is a wandering subspace for $T_1$ on $\mathcal{H}$, we conclude that $\cap_{n \geq 0} T_2^{n}\mathcal{E}_1$ is a wandering subspace for $T_1|_{\mathcal{H}_{10}}$ on $\mathcal{H}_{10}$ and the description of the subspace $\mathcal{H}_{10}$ helps to conclude that $T_1|_{\mathcal{H}_{10}}$ is an analytic $2$-isometric operator on $\mathcal{H}_{10}$. Hence, by applying Theorem \ref{olofsson-analytic}, we can immediately conclude that 
    \[ 
    (T_1|_{\mathcal{H}_{10}}, \mathcal{H}_{10}) \cong (M_z, \mathcal{D}_{\mathcal{E}_{10}}(\nu_1))
    \] for some positive Borel measure $\nu_1$ on $\mathbb{T}$ with $\mathcal{E}_{10} = \cap_{n \geq 0} T_2^n\mathcal{E}_1$. Analogously, we can also prove that the subspace $\mathcal{E}_{01} = \cap_{m \geq 0} T_1^m \, kerT_2^*$ is a wandering subspace for $T_2|_{\mathcal{H}_{01}}$ and $(T_2|_{\mathcal{H}_{01}}, \mathcal{H}_{01}) \cong (M_z, \mathcal{D}_{\mathcal{E}_{01}}(\nu_2))$. Moreover, from the above we know that $(T_1|_{\mathcal{H}_{11}}, T_2|_{\mathcal{H}_{11}})$ is a pair of doubly commuting analytic $2$-isometries on $\mathcal{H}_{11}$. Therefore, by intervening Theorem \ref{model thm with analyticity}, we have $(T_1|_{\mathcal{H}_{11}}, T_2|_{\mathcal{H}_{11}})$ on $\mathcal{H}_{11}$ is unitary equivalent with $(M_{z_1}, M_{z_2})$ on $\mathcal{D}^2_{\mathcal{E}}(\eta_1, \eta_2)$ for some positive measures $\eta_1$ and $\eta_2$ on $\mathbb{T}$ with $\mathcal{E} = \mathcal{E}_1 \cap \mathcal{E}_2$.
    \end{proof}

    \begin{rem}
    For a pair of doubly commuting $2$-isometries $(T_1,T_2)$ acting on a Hilbert space $\mathcal{H}$, the decomposition in Theorem \ref{wold-type decomposition} is unique in the following sense: for $\alpha_1, \alpha_2,i,j \in \{0,1\}$, if the subspace $\mathcal{K}_{\alpha_i,\alpha_j}$ is reducing for $T_i$ such that $\mathcal{H} = \mathcal{K}_{00} \oplus \mathcal{K}_{10} \oplus \mathcal{K}_{01} \oplus \mathcal{K}_{11}$ with the preperties that the restriction of  $T_i$ to $\mathcal{K}_{\alpha_i,\alpha_j}$ is unitary and analytic for $\alpha_i =0$ and $\alpha_i =1$ respectively,
    then $\mathcal{K}_{00} = \mathcal{H}_{00}$,  $\mathcal{K}_{10} = \mathcal{H}_{10}$, $\mathcal{K}_{01} = \mathcal{H}_{01}$ and $\mathcal{K}_{11} = \mathcal{H}_{11}$. Moreover the measures $\nu_1, \nu_2,\eta_1$ and $\eta_2$ in Theorem \ref{wold-type decomposition} are determined uniquely upto a unitary (see Remark \ref{uniqueness of measure upto a unitary}).
    \end{rem}

\noindent\textbf{Acknowledgements:}
The first named author's research is supported by the DST-INSPIRE Faculty Grant with Fellowship No. DST/INSPIRE/04/2020/001250 and the third named author's research work is supported by the BITS Pilani institute fellowship.

\end{document}